\newtheorem{Theorem}{Theorem}[section]
\newtheorem{Lemma}[Theorem]{Lemma} 
\newtheorem{Proposition}[Theorem]{Proposition}
\newtheorem{Corollary}[Theorem]{Corollary}
\newtheorem{Non-example}[Theorem]{Non-example}
\newtheorem{Remark}[Theorem]{Remark}
\newtheorem{Definition}[Theorem]{Definition}
\newtheorem{Notation}[Theorem]{Notation}
\newtheorem*{Theorem A}{Theorem A}
\newtheorem*{Corollary B}{Corollary B}
\newtheorem*{Proposition C}{Proposition C}
\newtheorem*{Theorem D}{Theorem D}
\newtheorem*{Corollary E}{Corollary E}
\newtheorem*{Theorem F}{Theorem F}
\newtheorem*{Proposition G}{Proposition G}
\newtheorem*{Theorem H}{Theorem H}
\newcommand*{\overbar}[1]{\mkern 1.5mu\overline{\mkern-1.5mu#1\mkern-1.5mu}\mkern 1.5mu}
\begin{document}
 \author{Charlie Beil}
\address{Heilbronn Institute for Mathematical Research, School of Mathematics, Howard House, The University of Bristol, Bristol, BS8 1SN, United Kingdom.}
 \email{charlie.beil@bristol.ac.uk}
 \title{Morita equivalences and Azumaya loci from Higgsing dimer algebras}
 \keywords{Dimer model, dimer algebra, superpotential algebra, Higgsing, localization, Morita equivalence, Azumaya locus.}
 \subjclass[2010]{14A22, 16G20, 16R20.}
 \date{}

\begin{abstract}
Let $\psi: A \to A'$ be a cyclic contraction of dimer algebras, with $A$ non-cancellative and $A'$ cancellative. 
$A'$ is then prime, noetherian, and a finitely generated module over its center.
In contrast, $A$ is often not prime, nonnoetherian, and an infinitely generated module over its center.
We present certain Morita equivalences that relate the representation theory of $A$ with that of $A'$.

We then characterize the Azumaya locus of $A$ in terms of the Azumaya locus of $A'$, and give an explicit classification of the simple $A$-modules parameterized by the Azumaya locus.
Furthermore, we show that if the smooth and Azumaya loci of $A'$ coincide, then the smooth and Azumaya loci of $A$ coincide.
This provides the first known class of algebras that are nonnoetherian and infinitely generated modules over their centers, with the property that their smooth and Azumaya loci coincide.
\end{abstract}

\maketitle

\tableofcontents

\section{Introduction}

We begin by recalling the definition of a dimer algebra, which is a type of quiver algebra whose quiver is dual to a dimer model.

\begin{Definition} \label{dimer def} \rm{ \
\begin{itemize}
 \item Let $Q$ be a finite quiver whose underlying graph $\overbar{Q}$ embeds into a two-dimensional real torus $T^2$, such that each connected component of $T^2 \setminus \overbar{Q}$ is simply connected and bounded by an oriented cycle of length at least 2, called a \textit{unit cycle}.\footnote{In contexts such as cluster algebras, it is useful to consider dimer algebras where $\overbar{Q}$ embeds into any compact surface; see for example \cite{BKM}.} 
The \textit{dimer algebra} of $Q$ is the quiver algebra $A = kQ/I$ with relations
$$I := \left\langle p - q \ | \ \exists \ a \in Q_1 \text{ such that } pa \text{ and } qa \text{ are unit cycles} \right\rangle \subset kQ,$$
where $p$ and $q$ are paths.
 \item Two paths $p,q \in A$ form a \textit{non-cancellative pair} if $p \not = q$, and there is a path $r \in A$ such that 
$$rp = rq \not = 0 \ \ \text{ or } \ \ pr = qr \not = 0.$$
$A$ is \textit{non-cancellative} if it contains a non-cancellative pair, and otherwise $A$ is \textit{cancellative}. 
\end{itemize}
}\end{Definition}

Cancellative dimer algebras (which are also called `consistent'\footnote{More precisely, let $A$ be a dimer algebra such that each arrow is contained in a perfect matching.  Then $A$ is cancellative if and only if $A$ satisfies a combinatorial `consistency condition' \cite[Theorem 1.1]{IU}, \cite[Theorem 6.2]{Bo}.}) are 3-Calabi-Yau algebras and noncommutative crepant resolutions of their 3-dimensional Gorenstein centers (e.g., \cite[Theorem 10.2]{Bo}, \cite{Br}, \cite[Theorem 4.3]{D}, \cite[Theorem 6.3]{MR}).
However, almost all dimer algebras are non-cancellative, and so it is of great interest to understand them. 

To study non-cancellative dimer algebras, the notion of a `cyclic contraction' was introduced in \cite[Section 4.1]{B2}.
This notion remains our primary tool in this paper.
We first define a contraction, which formalizes Higgsing in abelian quiver gauge theories. 

\begin{Definition} \label{contraction} \rm{ 
Let $Q = \left( Q_0, Q_1, \operatorname{t},\operatorname{h} \right)$ be a dimer quiver, with tail and head maps $\operatorname{t}, \operatorname{h}: Q_1 \to Q_0$.
Let $Q_1^* \subset Q_1$ be a subset of arrows. 
Consider the quiver $Q' = \left( Q'_0, Q'_1, \operatorname{t}',\operatorname{h}' \right)$ obtained by contracting each arrow in $Q_1^*$ to a vertex. 
Specifically,
$$Q_0' := Q_0 / \left\{ \operatorname{h}(a) \sim \operatorname{t}(a) \ | \ a \in Q_1^* \right\}, \ \ \ Q_1' = Q_1 \setminus Q_1^*,$$
and for each arrow $a \in Q'_1$, 
$$\operatorname{h}'(a) = \operatorname{h}(a) \ \ \ \text{ and } \ \ \ \operatorname{t}'(a) = \operatorname{t}(a).$$ 
There is a $k$-linear map of path algebras
$$\psi: kQ \rightarrow kQ'$$ 
defined by
$$\psi(a) = \left\{ \begin{array}{cl} a & \text{ if } \ a \in Q_0 \cup Q_1 \setminus Q_1^* \\ e_{\operatorname{t}(a)} & \text{ if } \ a \ \in Q_1^* \end{array} \right.$$
and extended multiplicatively to nonzero paths and $k$-linearly to $kQ$.
We call $\psi$ a \textit{contraction of dimer algebras} if $Q'$ is a dimer quiver, and $\psi$ induces a $k$-linear map of dimer algebras
$$\psi: A = kQ/I \to A' = kQ'/I'.$$
}\end{Definition}

We now describe the structure we wish to be preserved under a contraction. 

\begin{Definition} \label{simple matching} \rm{
Let $A = kQ/I$ be a dimer algebra.
\begin{itemize}
 \item A \textit{perfect matching} $D \subset Q_1$ is a set of arrows such that each unit cycle contains precisely one arrow in $D$.
 \item A \textit{simple matching} $D \subset Q_1$ is a perfect matching such that $Q \setminus D$ supports a simple $A$-module of dimension $1^{Q_0}$ (that is, $Q \setminus D$ contains a cycle that passes through each vertex of $Q$). 
 Denote by $\mathcal{S}$ the set of simple matchings of $A$. 
\end{itemize}
}\end{Definition}

Consider a contraction of dimer algebras $\psi: A \to A'$, where $A$ is non-cancellative and $A'$ is cancellative. 
Denote by 
$$B := k\left[ x_D \ | \ D \in \mathcal{S}' \right]$$
the polynomial ring generated by the simple matchings of $A'$.
Denote by $E_{ij}$ a square matrix with a $1$ in the $ij$-th slot and zeros elsewhere.
Since $A'$ is cancellative, there is an algebra monomorphism 
\begin{equation} \label{tau}
\tau: A' \to M_{|Q'_0|}(B)
\end{equation} 
defined on $i \in Q'_0$ and $a \in Q'_1$ by
$$\tau(e_i) := E_{ii}, \ \ \ \tau(a) :=  E_{\operatorname{h}(a),\operatorname{t}(a)} \prod_{a \in D \in \mathcal{S}'} x_D,$$
and extended multiplicatively and $k$-linearly to $A'$ \cite[Theorem 3.4]{B}.
For $p \in e_jA'e_i$, denote by $\bar{\tau}(p) \in B$ the single nonzero matrix entry of $\tau(p)$, that is,
$$\tau(p) = \bar{\tau}(p)E_{ji}.$$

\begin{Definition} \cite[Definition 4.3]{B} \rm{
Let $\psi: A \to A'$ be a contraction to a cancellative dimer algebra.
If 
$$S := k\left[ \cup_{i \in Q_0} \bar{\tau}\psi\left(e_iAe_i\right) \right] = k\left[ \cup_{i \in Q'_0} \bar{\tau}\left( e_iA'e_i \right) \right],$$
then we say $\psi$ is \textit{cyclic}, and call $S$ the \textit{cycle algebra} of $A$.
}\end{Definition}

Throughout, we will suppose $\psi: A \to A'$ is cyclic.
Consequently, the centers of $A$ and $A'$,
$$Z := Z(A) \ \ \ \text{ and } \ \ \ Z' := Z(A'),$$
both have Krull dimension 3 \cite[Theorem 4.66]{B2}. 
Furthermore, $Z'$ is noetherian and isomorphic to the cycle algebra $S$,
\begin{equation} \label{Z' S}
\begin{array}{rcl} 
Z' & \cong & S\\
z & \mapsto & \bar{\tau}(ze_i)
\end{array}
\end{equation}
where $i \in Q'_0$ is any vertex \cite[Theorems 3.3]{B2}. 
In contrast, $Z$ is nonnoetherian \cite[Theorem 4.45]{B2}.
Moreover, there is an embedding of algebras
\begin{equation} \label{mono}
\begin{array}{rcl}
\hat{Z} := Z/\operatorname{nil}Z & \hookrightarrow & R := k\left[ \cap_{i \in Q_0} \bar{\tau}\psi\left(e_iAe_i\right) \right] \subset S \subset B\\
z & \mapsto & \bar{\tau}\psi(ze_i)
\end{array}
\end{equation}
where again $i \in Q_0$ is any vertex \cite[Theorem 4.27]{B2}.
We will identify the reduced center $\hat{Z}$ of $A$ with its image in $R$.

\begin{Notation} \rm{
For $\mathfrak{p} \in \operatorname{Spec}Z$ (which necessarily contains the nilradical $\operatorname{nil}Z$), set 
\begin{equation} \label{prime notation}
\hat{\mathfrak{p}} := \mathfrak{p} + \operatorname{nil}Z \in \operatorname{Spec}\hat{Z}.
\end{equation} 
Conversely, for $\hat{\mathfrak{p}} \in \operatorname{Spec}\hat{Z}$, denote by $\mathfrak{p} \in \operatorname{Spec}Z$ the prime ideal satisfying (\ref{prime notation}).
}\end{Notation}

Using the notions of nonnoetherian geometry introduced in \cite{B1}, we may view the geometry of $Z$ as the affine variety $\operatorname{Max}S$, with precisely one `smeared-out' positive dimensional (closed) point $\mathfrak{z}_0 \in \operatorname{Max}Z$ \cite[Theorem 4.68.2]{B2}.
Specifically, $\mathfrak{z}_0$ is the common $Z$-annihilator of the vertex simple $A$-modules.
Furthermore, $Z$ is locally noetherian on the complement of $\mathfrak{z}_0$:
\begin{equation} \label{U z0}
\begin{array}{rcl}
U & := & \left\{ \mathfrak{n} \in \operatorname{Max}S \ | \ \hat{Z}_{\mathfrak{n} \cap \hat{Z}} = S_{\mathfrak{n}} \right\} \\
& \stackrel{\textsc{(i)}}{=} & \left\{ \mathfrak{n} \in \operatorname{Max}S \ | \ \hat{Z}_{\mathfrak{n} \cap \hat{Z}} \text{ is noetherian} \right\} \\
& \stackrel{\textsc{(ii)}}{=} & \left\{ \mathfrak{n} \in \operatorname{Max}S \ | \ \mathfrak{n} \cap \hat{Z} \not = \hat{\mathfrak{z}}_0 \right\},
\end{array}
\end{equation}
where (\textsc{i}) and (\textsc{ii}) hold by \cite[Theorem 4.63]{B2}. 
The locus $U$ captures the points where $\hat{Z}$ and $S$ look locally the same.

Recall that two rings are Morita equivalent if they have equivalent module categories. 
Our main results are the following.

\begin{Theorem}
(Theorems \ref{main1}, \ref{Azumaya 1}, \ref{Azumaya}, and Corollary \ref{main corollary}.) 
Let $\psi: A \to A'$ be a cyclic contraction.
\begin{enumerate}
 \item Let $\mathfrak{q} \in \operatorname{Spec}S$, and set $\hat{\mathfrak{p}} := \mathfrak{q} \cap \hat{Z}$.
Then the following are equivalent:
\begin{itemize}
 \item $\mathcal{Z}(\mathfrak{q}) \cap U \not = \emptyset$.
 \item The localizations 
$$A_{\mathfrak{p}} := A \otimes_Z Z_{\mathfrak{p}} \ \ \text{ and } \ \ A'_{\mathfrak{q}} := A' \otimes_{Z'} Z'_{\mathfrak{\mathfrak{q}}}$$
are Morita equivalent. 
 \item The localized algebra $A_{\mathfrak{p}}$ is prime, noetherian, and a finitely generated module over its center $Z_{\mathfrak{p}}$ with PI degree $|Q_0|$.
\end{itemize}
 \item The (noncommutative) function fields are Morita equivalent,
$$A \otimes_Z \operatorname{Frac}Z \ \sim \ A' \otimes_{Z'} \operatorname{Frac}Z' \ \sim \ \operatorname{Frac}Z \ \sim \ \operatorname{Frac}Z'.$$  
 \item The Azumaya locus $\mathcal{A} \subset \operatorname{Max}Z$ of $A$ coincides with the intersection of the Azumaya locus $\mathcal{A}' \subset \operatorname{Max}Z'$ of $A'$ and the locus $U \subset \operatorname{Max}Z'$,
$$\mathcal{A} \cong \mathcal{A}' \cap U.$$
\end{enumerate}
\end{Theorem}

From the third statement we obtain the first known class of algebras that are nonnoetherian and infinitely generated modules over their centers, with the property that their Azumaya and smooth loci coincide (Corollary \ref{Azumaya corollary}).
 
In addition, we give an explicit classification of the simple $A$-module isoclasses of dimension $1^{Q_0}$, or equivalently, the $A$-modules which sit over the Azumaya locus $\mathcal{A}$ of $A$ (Proposition \ref{simple 1} and Theorem \ref{almost impression}).
Finally, we show that the cycle algebra $S$ is unique (Theorem \ref{cycle algebra theorem}).
Specifically, we show that the cycle algebra is isomorphic to the $\operatorname{GL}$-invariant rings
$$S \cong k[ \overbar{\mathbb{S}(A)} ]^{\operatorname{GL}} = k[ \overbar{\mathbb{S}(A')} ]^{\operatorname{GL}},$$
where 
$$\mathbb{S}(A) \subset \operatorname{Rep}_{1^{Q_0}}(A) \ \ \ \text{ and } \ \ \ \mathbb{S}(A') \subset \operatorname{Rep}_{1^{Q'_0}}(A')$$ 
are the open subvarieties consisting of simple modules, and $\overbar{\mathbb{S}(A)}$ and $\overbar{\mathbb{S}(A')}$ are their Zariski closures.\\
\\
\textit{Conventions:}
Throughout, $k$ is an uncountable algebraically closed field of characteristic zero.
We will denote by $\operatorname{Frac}R$ the ring of fractions of $R$; by $\operatorname{Max}R$ the set of maximal ideals of $R$; by $\operatorname{Spec}R$ either the set of prime ideals of $R$ or the affine $k$-scheme with global sections $R$; by $R_{\mathfrak{p}}$ the localization of $R$ at $\mathfrak{p} \in \operatorname{Spec}R$; by $\operatorname{nil}R$ the nilradical of $R$; and by $\mathcal{Z}(\mathfrak{a})$ the closed set $\left\{ \mathfrak{m} \in \operatorname{Max}R \ | \ \mathfrak{m} \supseteq \mathfrak{a} \right\}$ of $\operatorname{Max}R$ defined by the subset $\mathfrak{a} \subset R$. 

We will denote by $Q = \left( Q_0,Q_1,\operatorname{t}, \operatorname{h} \right)$ a quiver with vertex set $Q_0$, arrow set $Q_1$, and head and tail maps $\operatorname{h},\operatorname{t}: Q_1 \to Q_0$.
We will denote by $kQ$ the path algebra of $Q$, and by $e_i$ the idempotent corresponding to vertex $i \in Q_0$.
Multiplication of paths is read right to left, following the composition of maps. 
A loop in a quiver is an arrow which is a cycle. 
By module we mean left module.
If $\rho: A \to \operatorname{End}_k(V)$ is a representation of $A$, then we will denote by $V_{\rho} := V$ the left $A$-module defined by $\rho$.
By infinitely generated $R$-module, we mean an $R$-module that is not finitely generated.
A ring is noetherian if it is both left and right noetherian. 
We will often write $rs$ for $r \otimes s \in R \otimes S$ if the tensor product is clear.
Finally, by non-constant monomial, we mean a monomial that is not in $k$.

\section{Morita equivalences} \label{Section 2}

Throughout, $A$ is a non-cancellative dimer algebra; $\psi: A \to A'$ is a cyclic contraction; and unless stated otherwise, $\mathfrak{q} \in \operatorname{Spec}S$ satisfies 
$$\mathcal{Z}(\mathfrak{q}) \cap U \not = \emptyset.$$
Set $\hat{\mathfrak{p}} := \mathfrak{q} \cap \hat{Z} \in \operatorname{Spec}\hat{Z}$. 

\begin{Notation} \rm{
Let $\pi: \mathbb{R}^2 \rightarrow T^2$ be a covering map such that for some $i \in Q_0$, 
$$\pi\left(\mathbb{Z}^2 \right) = i \in Q_0.$$
Denote by $Q^+ := \pi^{-1}(Q) \subset \mathbb{R}^2$ the covering quiver of $Q$.  
For each path $p$ in $Q$, denote by $p^+$ the unique path in $Q^+$ with tail in the unit square $[0,1) \times [0,1) \subset \mathbb{R}^2$ satisfying $\pi(p^+) = p$.

We denote by $\sigma_i \in A$ the unique unit cycle at vertex $i \in Q_0$.
By a \textit{cyclic subpath} of a path $p$, we mean a proper subpath of $p$ that is a non-vertex cycle.

We consider the following sets of cycles in $A$.
\begin{itemize}
 \item Let $\mathcal{C}$ be the set of cycles in $A$ (i.e., cycles in $Q$ modulo $I$).
 \item For $u \in \mathbb{Z}^2$, let $\mathcal{C}^u$ be the set of cycles $p \in \mathcal{C}$ such that 
$$\operatorname{h}(p^+) = \operatorname{t}(p^+) + u \in Q_0^+.$$
 \item For $i \in Q_0$, let $\mathcal{C}_i$ be the set of cycles in the vertex corner ring $e_iAe_i$.
 \item Let $\hat{\mathcal{C}}$ be the set of cycles $p \in \mathcal{C}$ such that $(p^2)^+$ does not have a cyclic subpath; or equivalently, the lift of each cyclic permutation of $p$ does not have a cyclic subpath.
\end{itemize}
We denote the intersection $\hat{\mathcal{C}} \cap \mathcal{C}^u \cap \mathcal{C}_i$, for example, by $\hat{\mathcal{C}}^u_i$.
}\end{Notation}

\begin{Notation} \rm{
For $p \in e_jAe_i$ and $q \in e_{\ell}A'e_k$, set 
$$\overbar{p} := \bar{\tau}\psi(p) \in B \ \ \ \text{ and } \ \ \ \overbar{q} := \bar{\tau}(q) \in B.$$
}\end{Notation}

\begin{Notation} \rm{
Set 
$$\begin{array}{rcl}
Q_1^{\mathcal{S}} & := & \left\{ a \in Q_1 \ | \ a \not \in D \text{ for each } D \in \mathcal{S} \right\}\\
& \stackrel{\textsc{(i)}}{=} & \left\{ a \in Q_1 \ | \ \rho(a) \not = 0 \text{ for each simple representation } \rho \text{ of dimension } 1^{Q_0} \right\},
\end{array}$$
where (\textsc{i}) holds by \cite[Lemma 4.39]{B2}.
}\end{Notation}

\begin{Lemma} \label{sr lemma}
Let $z$ be a non-constant monomial in $Z$ which is not divisible by $\sigma$.
Let $r$ be a path whose arrow subpaths are all in $Q_1^{\mathcal{S}}$.
Then there is a path $s$ such that 
$$ze_{\operatorname{t}(r)} = sr.$$
\end{Lemma}

\begin{proof}
(i) First suppose $r$ is an arrow, $r = \delta \in Q_1^{\mathcal{S}}$.
Let $z \in Z$ be a non-constant monomial such that $\sigma \nmid z$.
Since $\hat{Z} \subseteq R$ by (\ref{mono}), there are cycles
$$p \in \mathcal{C}^u_{\operatorname{h}(\delta)} \ \ \ \text{ and } \ \ \ q \in \mathcal{C}^v_{\operatorname{t}(\delta)}$$
such that $\overbar{p} = \overbar{q} = z$.
Since $\sigma \nmid g = \overbar{p} = \overbar{q}$, $p$ and $q$ are in $\hat{\mathcal{C}}$ by \cite[Lemma 4.11.2]{B2}.
In particular, $u$ and $v$ are nonzero.
Whence $u = v$ by \cite[Lemma 4.13]{B2}.
Therefore $p$ and $q$ are in $\hat{\mathcal{C}}^u$.
Thus the paths $(p \delta)^+$ and $(\delta q)^+$ bound a compact region 
$$\mathcal{R}_{p \delta,\delta q} \subset \mathbb{R}^2.$$
Furthermore, since $z \in R$, we may suppose that the interior of $\mathcal{R}_{\delta p,q\delta}$ does not contain any vertices of $Q^+$, by \cite[Lemma 2.15]{B2}. 

First suppose $p^+$ and $q^+$ do not intersect. 
Then $\delta$ is contained in a simple matching $D$ of $A$ by \cite[Lemma 2.16]{B2}; see Figure \ref{r's figure}.i.
But this is a contradiction since $\delta \in Q_1^{\mathcal{S}}$.

Therefore $p^+$ and $q^+$ intersect at a vertex $i^+$; see Figure \ref{r's figure}.ii. 
By assumption, $\sigma \nmid z$.
Thus, since $\overbar{\delta} = 1$,
$$\overbar{p}_1 = \overbar{q}_1 \ \ \text{ and } \ \ \overbar{p}_2 = \overbar{q}_2,$$
by \cite[Lemma 2.3.2]{B2}.
Whence
$$\overbar{q_2 p_1 \delta} = \overbar{q}_2 \overbar{q}_1 = \overbar{q} = z.$$
Therefore, since $z \in R$ and $\sigma \nmid z$,
$$q_2p_1\delta \in Ze_{\operatorname{t}(\delta)},$$ 
by \cite[Proposition 4.30.1]{B2}. 
In particular, we may take $s = q_2p_1$.

(ii) Now suppose $r = \delta_{\ell} \cdots \delta_2\delta_1 \not = 0$, with each $\delta_i \in Q_1^{\mathcal{S}}$.
By Claim (1), for each $1 \leq i \leq \ell$ there is a central element $z_i \in Z$ such that 
$$z_ie_{\operatorname{t}(\delta_i)} = s_i\delta_i.$$
Set $s := s_1s_2 \cdots s_{\ell}$.
Then the central element $z := z_{\ell} \cdots z_2z_1$ satisfies
$$\begin{array}{rcl}
ze_{\operatorname{t}(s)} & = & z_{\ell} \cdots z_3 z_2 (s_1\delta_1) \\
& = & z_{\ell} \cdots z_3s_1 z_2 \delta_1 \\
& = & z_{\ell} \cdots z_3s_1 s_2 \delta_2\delta_1 \\
& \vdots & \\
& = & s_1 s_2 \cdots s_{\ell}\delta_{\ell} \cdots \delta_2 \delta_1 \\
& = & sr.
\end{array}$$
\end{proof}

\begin{figure}
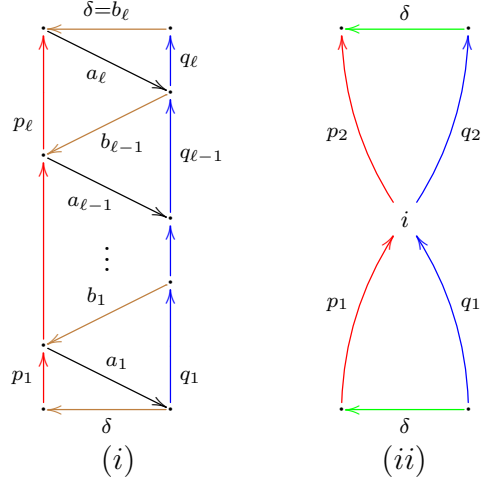

$$\begin{array}{ccc}
\xy 0;/r.4pc/:
(-5,-15)*{\cdot}="1";(5,-15)*{\cdot}="1'";
(-5,-10)*{\cdot}="2";(5,-5)*{\cdot}="3";
(0,-2.5)*{\vdots}="";
(5,0)*{\cdot}="6";(-5,5)*{\cdot}="5";
(5,10)*{\cdot}="7";
(-5,15)*{\cdot}="8";(5,15)*{\cdot}="8'";
(5,-10)*{}="9";(-5,10)*{}="10";
{\ar^{\delta}@[brown]"1'";"1"};{\ar_{\delta = b_{\ell}}@[brown]"8'";"8"};
{\ar^{p_1}@[red]"1";"2"};{\ar^{a_1}"2";"1'"};{\ar@{-}_{q_1}@[blue]"1'";"9"};
{\ar^{}@[blue]"9";"3"};{\ar_{b_1}@[brown]"3";"2"};{\ar^{}@[red]"2";"5"};{\ar^{}@[blue]"3";"6"};
{\ar_{q_{\ell-1}}@[blue]"6";"7"};{\ar^{b_{\ell-1}}@[brown]"7";"5"};{\ar@{-}^{p_{\ell}}@[red]"5";"10"};{\ar^{}@[red]"10";"8"};
{\ar_{a_{\ell}}"8";"7"};{\ar_{q_{\ell}}@[blue]"7";"8'"};{\ar_{a_{\ell-1}}"5";"6"};
\endxy
& \ \ \ \ &
\xy 0;/r.4pc/:
(-5,-15)*{\cdot}="1";(5,-15)*{\cdot}="1'";
(-5,15)*{\cdot}="8";(5,15)*{\cdot}="8'";
(0,0)*+{\text{\scriptsize{$i$}}}="2";
{\ar@/^/^{p_1}@[red]"1";"2"};
{\ar@/^/^{p_2}@[red]"2";"8"};
{\ar@/_/_{q_1}@[blue]"1'";"2"};
{\ar@/_/_{q_2}@[blue]"2";"8'"};
{\ar^{\delta}@[green]"1'";"1"};{\ar_{\delta}@[green]"8'";"8"};
\endxy
\\
(i) & & (ii)
\end{array}$$
\caption{Cases for Lemma \ref{sr lemma}.  
In case (i), $p$ and $q$ factor into paths $p = p_{\ell} \cdots p_2p_1$ and $q = q_{\ell} \cdots q_2q_1$, where $a_1, \ldots, a_{\ell}, b_1, \ldots, b_{\ell}$ are arrows, and the cycles $a_jb_jq_j$ and $b_{j-1}a_jp_j$ are unit cycles.
By \cite[Lemma 2.16]{B2}, the $b_j$ arrows, drawn in brown, belong to a simple matching $D$ of $A$.
In case (ii), $p$ and $q$ factor into paths $p = p_2e_ip_1$ and $q = q_2 e_iq_1$.}
\label{r's figure}
\end{figure}

Denote the origin of $\operatorname{Max}S$ by
$$\mathfrak{n}_0  := \left( \overbar{s} \in S \ | \ s \text{ a non-vertex cycle} \right)S \in \operatorname{Max}S.$$
Then the $Z$-annihilator $\mathfrak{z}_0 \in \operatorname{Max}Z$ of the vertex simple $A$-modules satisfies
$$\hat{\mathfrak{z}}_0 = \mathfrak{n}_0 \cap \hat{Z} \in \operatorname{Max}\hat{Z}.$$

\begin{Lemma} \label{invertible}
Suppose each non-constant monomial in $\hat{Z}$ is divisible by $\sigma$.
Then each monomial in $\hat{Z}$ is invertible in the localization $\hat{Z}_{\mathfrak{q} \cap \hat{Z}}$.
\end{Lemma}

\begin{proof}
Suppose the hypotheses hold.
Then $\mathfrak{q}$ is contained in some maximal ideal $\mathfrak{n} \in \operatorname{Max}S \setminus \{ \mathfrak{n}_0 \}$.

Assume to the contrary that $\hat{\mathfrak{m}} := \mathfrak{n} \cap \hat{Z}$ contains a monomial.
Then $\hat{\mathfrak{m}} = \hat{\mathfrak{z}}_0$ by \cite[Lemma 4.56]{B2}.
Whence 
$$\mathfrak{n} \cap \hat{Z} = \hat{\mathfrak{m}} = \hat{\mathfrak{z}}_0 = \mathfrak{n}_0 \cap \hat{Z}.$$
But then $\mathfrak{n} = \mathfrak{n}_0$ by \cite[Theorem 2.5.1]{B1}, a contradiction.
Therefore each monomial in $\hat{Z}$ is not in $\hat{\mathfrak{m}}$.
In particular, each monomial in $\hat{Z}$ is not in $\mathfrak{q} \cap \hat{Z} \subseteq \hat{\mathfrak{m}}$.
\end{proof}

\begin{Theorem} \label{=0}
Suppose $\mathcal{Z}(\mathfrak{q}) \cap U \not = \emptyset$.
If $p,q \in A$ is a non-cancellative pair, then $p = q$ in the localization $A_{\mathfrak{p}}$.
In particular,
$$\operatorname{nil}Z \cdot Z_{\mathfrak{p}} = 0.$$
\end{Theorem}

\begin{proof}
Fix $\mathfrak{n} \in \mathcal{Z}(\mathfrak{q}) \cap U$; then $\hat{\mathfrak{p}}$ is contained in $\hat{\mathfrak{m}} := \mathfrak{n} \cap \hat{Z} \in \operatorname{Max}\hat{Z}$.

(i) Consider a non-cancellative pair $p,q \in e_jAe_i$.
We claim that $p = q$ in $A_{\mathfrak{p}}$. 
It suffices to suppose that there is no non-cancellative pair $s^+,t^+$ which bounds a region $\mathcal{R}_{s,t} \subset \mathbb{R}^2$ properly contained in the region $\mathcal{R}_{p,q}$ bounded by $p^+,q^+$.\footnote{Such a pair $p,q$ is called \textit{minimal} in \cite{B2}.} 
Let $r$ be a path of minimal length such that $rp = rq \not = 0$.
Then by \cite[Proposition 4.37]{B2}, each arrow subpath of $r$ is in $Q_1^{\mathcal{S}}$.

(i.a) First suppose there is a non-constant monomial in $\hat{Z}$ which is not divisible (in $B$) by $\sigma$.
Then by \cite[Lemma 4.58]{B2}, there is a non-constant monomial $z \in Z \setminus \mathfrak{m}$ such that $\sigma \nmid z$. 
Thus by Lemma \ref{sr lemma}, there is a path $s$ such that 
$$ze_{\operatorname{t}(r)} = sr.$$ 
Therefore in $A_{\mathfrak{p}}$,
$$p - q = \frac{z}{z}(p-q) = \frac{sr}{z} (p-q) = \frac{s}{z} r(p-q) = 0.$$

(i.b) Now suppose every non-constant monomial in $\hat{Z}$ is divisible by $\sigma$.
Let $s$ be path from $\operatorname{h}(r)$ to $\operatorname{t}(r)$ that passes through each vertex of $Q$.
Then $\overbar{sr} \in R$.
Thus there is some $n \geq 1$ such that 
$$(sr)^n \in Ze_{\operatorname{t}(r)},$$ 
by \cite[Proposition 4.30.3]{B2}.
Let $z \in Z$ be such that $ze_{\operatorname{t}(r)} = (sr)^n$.
Then $z$ is invertible in $Z_{\mathfrak{p}}$ by Lemma \ref{invertible}.
Therefore in $A_{\mathfrak{p}}$,
$$p - q = \frac{z}{z}(p-q) = \frac{(sr)^{n-1}sr}{z} (p-q) = \frac{(sr)^{n-1}s}{z} r(p-q) = 0.$$

(ii) By \cite[Theorem 4.24]{B2},
$$\operatorname{nil}Z = Z \cap \operatorname{ker}\psi.$$ 
Therefore by Claim (i),
$$\operatorname{nil}Z \cdot Z_{\mathfrak{p}} = \operatorname{ker}\psi|_Z \cdot Z_{\mathfrak{p}} = 0.$$
\end{proof}

\begin{Remark} \rm{
The \textit{homotopy algebra} of a dimer algebra $A$ is the quotient
$$\tilde{A} := A/\left\langle p - q \ | \ p,q \text{ is a non-cancellative pair} \right\rangle,$$
introduced in \cite[Definition 4.33]{B2}.
In what follows, the results for $A$ that depend on Theorem \ref{=0} also hold for its homotopy algebra $\tilde{A}$, since $\tilde{A}$ is cancellative by construction.
}\end{Remark}

\begin{Lemma} \label{most}
There is an algebra isomorphism
$$A \otimes_Z \hat{Z}_{\hat{\mathfrak{p}}} \cong A \otimes_Z Z_{\mathfrak{p}}.$$
\end{Lemma}

\begin{proof}
(i) We first claim that
$$A_{\mathfrak{p}} \otimes_Z \hat{Z} \cong A \otimes_Z \hat{Z}_{\hat{\mathfrak{p}}}.$$
It suffices to show that 
$$Z_{\mathfrak{p}} \otimes_Z \hat{Z} \cong \hat{Z}_{\hat{\mathfrak{p}}}.$$
Let $z \in Z$ and set $\hat{z} := z + \operatorname{nil}Z$.
If $z^{-1} \in Z_{\mathfrak{p}}$, then $z \not \in \mathfrak{p}$.
But $z \not \in \operatorname{nil}Z$ since $\operatorname{nil}Z \subset \mathfrak{p}$.
Thus $\hat{z}^{-1} \in \hat{Z}_{\hat{\mathfrak{p}}}$.
Conversely, if $\hat{z}^{-1} \in \hat{Z}_{\hat{\mathfrak{p}}}$, then $z \not \in \operatorname{nil}Z \cup \mathfrak{p}$.
Whence $z \not \in \mathfrak{p}$.
Therefore $z^{-1} \in Z_{\mathfrak{p}}$.

(ii) Applying the right exact functor $A_{\mathfrak{p}} \otimes_Z -$ to the short exact sequence
$$0 \to \operatorname{nil}Z \to Z \to \hat{Z} \to 0$$
we obtain the exact sequence
$$A_{\mathfrak{p}} \otimes_Z \operatorname{nil}Z \to A_{\mathfrak{p}} \otimes_Z Z \cong A_{\mathfrak{p}} \to A_{\mathfrak{p}} \otimes_Z \hat{Z} \stackrel{\textsc{(i)}}{\cong} A \otimes_Z \hat{Z}_{\hat{\mathfrak{p}}} \to 0,$$
where (\textsc{i}) holds by Claim (i).
But the left-most term is zero by Theorem \ref{=0}.
Therefore $A \otimes_Z \hat{Z}_{\hat{\mathfrak{p}}} \cong A_{\mathfrak{p}}$.
\end{proof}

\begin{Lemma} \label{central iso}
The restriction $\psi: Z \to A'$ induces an algebra isomorphism 
$$Z_{\mathfrak{p}} \cong \hat{Z}_{\hat{\mathfrak{p}}} \stackrel{\cong}{\longrightarrow} Z'_{\mathfrak{q}}.$$
\end{Lemma}

\begin{proof}
There are algebra isomorphisms
$$Z_{\mathfrak{p}} \stackrel{\textsc{(i)}}{\cong} \hat{Z}_{\hat{\mathfrak{p}}} \stackrel{\textsc{(ii)}}{\cong} S_{\mathfrak{q}} \stackrel{\textsc{(iii)}}{\cong} Z'_{\mathfrak{q}}.$$
Indeed, (\textsc{i}) holds by Theorem \ref{=0}.
(\textsc{ii}) holds since $S$ is a depiction of $\hat{Z}$ \cite[Theorem 4.68.1]{B2}, and our assumption that $\mathcal{Z}\left(\mathfrak{q}\right) \cap U \not = \emptyset$. 
Finally, (\textsc{iii}) holds by (\ref{Z' S}).
\end{proof}

It follows from Lemma \ref{central iso} that the contraction $\psi: A \to A'$ extends to a $k$-linear map
$$\psi_{\mathfrak{p}}: A_{\mathfrak{p}} = A \otimes_Z Z_{\mathfrak{p}} \longrightarrow A'_{\mathfrak{q}} = A' \otimes_{Z'} Z'_{\mathfrak{q}},$$
where the restriction of $\psi_{\mathfrak{p}}$ to $Z_{\mathfrak{p}}$ is the isomorphism $Z_{\mathfrak{p}} \to Z'_{\mathfrak{q}}$.
We call this extension a \textit{localized contraction}.

\begin{Remark} \label{*S} \rm{
If a contraction $\psi: A \to A'$ is cyclic, then 
$$Q^*_1 \subseteq Q_1^{\mathcal{S}}$$
by \cite[Theorem 4.38]{B2}.
In words, no contracted arrow is represented by zero in any simple representation of $A$ of dimension vector $1^{Q_0}$.
}\end{Remark}

\begin{Proposition} \label{delta*}
Suppose $\delta \in Q_1^*$, or more generally, $\delta \in Q_1^{\mathcal{S}}$.
Then $A_{\mathfrak{p}}$ contains an element $\delta^* \in e_{\operatorname{t}(\delta)} A_{\mathfrak{p}} e_{\operatorname{h}(\delta)}$ satisfying
\begin{equation} \label{delta* eq}
\delta^* \delta = e_{\operatorname{t}(\delta)} \ \ \text{ and } \ \ \delta \delta^* = e_{\operatorname{h}(\delta)}.
\end{equation}
Furthermore, if $\delta \in Q_1^*$, then
$$\psi_{\mathfrak{p}}(\delta^*) = \psi_{\mathfrak{p}}(\delta) \in Q'_0.$$
\end{Proposition}

\begin{proof}
Suppose $\delta \in Q_1^{\mathcal{S}}$.
Fix $\mathfrak{n} \in \mathcal{Z}(\mathfrak{q}) \cap U$; then $\hat{\mathfrak{p}}$ is contained in $\hat{\mathfrak{m}} := \mathfrak{n} \cap \hat{Z} \in \operatorname{Max}\hat{Z}$. 

By Claims (i.a) and (i.b) in the proof of Theorem \ref{=0} (with $r = \delta$), there is some $z \in Z \setminus \mathfrak{m}$ and a path $s$ such that $$ze_{\operatorname{t}(\delta)} = s \delta.$$ 
Set
$$\delta^* := \frac{s}{z} \in A_{\mathfrak{p}}.$$
Then in $A_{\mathfrak{p}}$,
$$\delta^* \delta = \frac{s}{z} \delta = \frac{ze_{\operatorname{t}(\delta)}}{z} = \frac zz e_{\operatorname{t}(\delta)} = e_{\operatorname{t}(\delta)}.$$
Similarly, $\delta \delta^* = e_{\operatorname{h}(\delta)}$.

Finally, set $w := \psi(z)$.
Then
$$\psi_{\mathfrak{p}}(\delta^*) = \psi_{\mathfrak{p}}(s) w^{-1} = \psi_{\mathfrak{p}}(s \delta) w^{-1}
= \psi_{\mathfrak{p}}(e_{\operatorname{t}(\delta)}z)w^{-1} = \psi_{\mathfrak{p}}(e_{\operatorname{t}(\delta)})ww^{-1} = \psi_{\mathfrak{p}}(\delta).$$
\end{proof}

\begin{Remark} \rm{
Cyclic contractions are not surjective in general.
Indeed, if a cyclic contraction $\psi$ contracts a unit cycle to a removable 2-cycle, then $\psi$ is not surjective; see \cite[Remark 4.2]{B2}.
}\end{Remark}

\begin{Lemma} \label{surjective}
The localized contraction $\psi_{\mathfrak{p}}: A_{\mathfrak{p}} \to A'_{\mathfrak{q}}$ is surjective, but not injective.
\end{Lemma}

\begin{proof}
(i) We first claim that 
\begin{equation} \label{A' subset}
A' \subseteq \psi_{\mathfrak{p}}(A_{\mathfrak{p}}).
\end{equation}
Indeed, let $q \in A'$ be a path which is not in the $\psi$-image of $A$.
Then $q$ factors into paths
$$q = q_n \psi(\delta_{n-1}) q_{n-1} \cdots q_2\psi(\delta_1)q_1,$$
where for each $i$, there is a path $p_i \in A$ such that $\psi(p_i) = q_i$; $\delta_i \in Q_1^*$; and 
$$\operatorname{h}(\delta_i) = \operatorname{h}(p_i) \ \ \text{ and } \ \ \operatorname{t}(\delta_i) = \operatorname{t}(p_{i+1}).$$ 
Furthermore, for each $i$ there is an `arrow' $\delta^*_i$, with opposite orientation to $\delta_i$, satisfying
$$\psi_{\mathfrak{p}}(\delta^*_i) = \psi_{\mathfrak{p}}(\delta_i) \in Q'_0,$$
by Proposition \ref{delta*}.
Thus $q$ is the $\psi_{\mathfrak{p}}$-image of the element
$$p = p_n \delta_{n-1}^* p_{n-1} \cdots p_2 \delta_1^* p_1 \in A_{\mathfrak{p}}.$$
Therefore (\ref{A' subset}) holds.

Furthermore, by the definition of $\psi_{\mathfrak{p}}$,
$$Z'_{\mathfrak{q}} = \psi_{\mathfrak{p}}(Z_{\mathfrak{p}}) \subseteq \psi_{\mathfrak{p}}(A_{\mathfrak{p}}).$$
Therefore $A'_{\mathfrak{q}} = A' \otimes_{Z'} Z'_{\mathfrak{q}} \subseteq \psi_{\mathfrak{p}}(A_{\mathfrak{p}})$. 

(ii) $\psi_{\mathfrak{p}}$ is not injective since $\psi_{\mathfrak{p}}(e_{\operatorname{t}(\delta)}) = \psi_{\mathfrak{p}}(e_{\operatorname{h}(\delta)})$.
\end{proof}

For the following, set
$$\epsilon_0 := 1_A - \sum_{\delta \in Q_1^*} e_{\operatorname{h}(\delta)}.$$ 
We note that a non-trivial contraction can never be an algebra homomorphism \cite[Remark 4.10]{B2}.

\begin{Proposition} \label{homomorphism on corner} \ 
\begin{enumerate}
 \item The map
$$\psi: \epsilon_0 A \epsilon_0 \to A'$$
is an algebra homomorphism.
Furthermore, its localization
\begin{equation} \label{localized restriction}
\psi_{\mathfrak{p}}: \epsilon_0 A_{\mathfrak{p}} \epsilon_0 \to A'_{\mathfrak{q}}
\end{equation}
is an algebra isomorphism.
 \item For each $i,j \in Q_0$, the $k$-linear map
\begin{equation} \label{restriction to corner}
\psi_{\mathfrak{p}}: e_i A_{\mathfrak{p}} e_j \to e_{\psi(i)}A'_{\mathfrak{q}}e_{\psi(j)}
\end{equation}
is bijective.
Consequently, if $i = j$, then it is an algebra isomorphism.
\end{enumerate}
\end{Proposition}

\begin{proof}
(1.i) We first claim that the restriction $\psi|_{\epsilon_0 A \epsilon_0}$ is an algebra homomorphism.
Since $\psi$ is a $k$-linear map, it suffices to show that the restriction is multiplicative on paths.

Let $p,q \in \epsilon_0 A \epsilon_0$ be paths. 
First suppose $\psi(q)\psi(p) \not = 0$.
Then 
$$\operatorname{h}(\psi(p)) = \operatorname{t}(\psi(q)) \in Q'_0.$$
Thus $\operatorname{h}(p) = \operatorname{t}(q)$ since $p,q \in \epsilon_0 A \epsilon_0$.
Whence $qp \not = 0$.
Therefore $\psi(qp) = \psi(q) \psi(p)$.

Now suppose $\psi(q) \psi(p) = 0$.
Then $\operatorname{h}(\psi(p)) \not = \operatorname{t}(\psi(q))$.
In particular, $\operatorname{h}(p) \not = \operatorname{t}(q)$.
Therefore $\psi(qp) = \psi(0) = 0 = \psi(q) \psi(p)$.

(1.ii) We now claim that the map (\ref{localized restriction}) is an algebra isomorphism. 
Indeed, (\ref{localized restriction}) is an algebra homomorphism by Claim (i).
Furthermore, 
$$\psi(\epsilon_0) = \sum_{i \in Q'_0} e_i = 1_{A'}.$$
Thus (\ref{localized restriction}) is surjective by Lemma \ref{surjective}.

To show injectivity, let $\delta \in Q_1^*$. 
The $\psi_{\mathfrak{p}}$-preimage of the vertex $\psi(\delta) \in Q_0'$ consists of the four elements
$$\delta, \ \delta^*, \ e_{\operatorname{t}(\delta)}, \ e_{\operatorname{h}(\delta)},$$ 
by Proposition \ref{delta*}.
If the tail of $\delta$ is not the head of another contracted arrow in $A$, then the only one of these elements in the corner ring $\epsilon_0 A_{\mathfrak{p}} \epsilon_0$ is the idempotent $e_{\operatorname{t}(\delta)}$, and otherwise none of them are in $\epsilon_0 A_{\mathfrak{p}} \epsilon_0$.
Therefore (\ref{localized restriction}) is injective by Theorem \ref{=0} and (\ref{delta* eq}).
This proves our claim.

(2) The map (\ref{restriction to corner}) is injective by Theorem \ref{=0} and (\ref{delta* eq}), and surjective by Lemma \ref{surjective}. 
Furthermore, the restriction of $\psi$ to the vertex corner ring $e_iAe_i$,
$$\psi: e_iAe_i \to A',$$ 
is an algebra homomorphism. 
\end{proof}

\begin{Theorem} \label{main1}
Let $\psi: A \to A'$ be a cyclic contraction of dimer algebras.
Then the localizations $A_{\mathfrak{p}}$ and $A'_{\mathfrak{q}}$ are Morita equivalent if and only if $\mathcal{Z}(\mathfrak{q}) \cap U \not = \emptyset$.
\end{Theorem}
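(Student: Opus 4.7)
The plan is to prove the two directions separately, with most of the effort spent on the forward direction $(\Leftarrow)$.

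For $(\Leftarrow)$, assume $\mathbb{V}(\mathfrak{q}) \cap U \not= \emptyset$. The strategy is to realize $A'_{\mathfrak{q}}$ as a full corner of $A_{\mathfrak{p}}$ cut out by a well-chosen idempotent, which automatically yields the Morita equivalence. Combining Proposition \ref{* subset} with Proposition \ref{delta*}, each contracted arrow $\delta \in Q_1^* \subseteq Q_1^{\dagger}$ acquires a two-sided inverse $\delta^* \in A_{\mathfrak{p}}$; hence any path $p$ in $Q$ entirely contracted to a vertex (i.e.\ composed of arrows in $Q_1^*$) is a unit in $A_{\mathfrak{p}}$. In particular, if $j, k \in Q_0$ lie in the same $\psi$-fiber they are linked by such a path, and the idempotents $e_j, e_k$ are conjugate in $A_{\mathfrak{p}}$.

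Next I would pick a section $V \subseteq Q_0$ of the surjection $\psi|_{Q_0}: Q_0 \twoheadrightarrow Q'_0$ and form $e_V := \sum_{i \in V} e_i$. The previous step shows $A_{\mathfrak{p}} e_V A_{\mathfrak{p}} = A_{\mathfrak{p}}$: any $e_k$ with $k \notin V$ decomposes as $e_k = p \, e_{k'} \, p^*$, where $k' \in V$ is the representative above $\psi(k)$ and $p$ is a contracted path from $k'$ to $k$. Thus $e_V$ is a full idempotent, and $A_{\mathfrak{p}}$ is Morita equivalent to $e_V A_{\mathfrak{p}} e_V$ by the standard corner-ring equivalence. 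I would then define $\Phi : e_V A_{\mathfrak{p}} e_V \to A'_{\mathfrak{q}}$ by $p \mapsto \psi(p)$ on paths and extend it over the central localization using $R \cong Z/\operatorname{Nil}(Z)$ from (\ref{RS}) together with $Z' \cong S = S'$. Theorem \ref{=0} delivers well-definedness (any relation $\psi(p) \sim \psi(q)$ forces $p = q$ in $A_{\mathfrak{p}}$) and shows the nilradical vanishes after localization, so $Z_{\mathfrak{p}} \cong R_{\mathfrak{p}}$ matches $Z'_{\mathfrak{q}} \cong S_{\mathfrak{q}}$. Surjectivity then follows because every path in $Q'$ between vertices of $\psi(V)$ lifts to a path in $Q$ between elements of $V$, possibly after pre- and post-composition with contracted segments now inverted; injectivity is Theorem \ref{=0} once more.

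For $(\Rightarrow)$, Morita equivalent rings have isomorphic centers, so $Z_{\mathfrak{p}} \cong Z'_{\mathfrak{q}}$. Since $Z' \cong S$ is a domain, $Z'_{\mathfrak{q}}$ is reduced, hence so is $Z_{\mathfrak{p}}$; via (\ref{RS}) this yields $Z_{\mathfrak{p}} \cong R_{\mathfrak{p}}$ and therefore $R_{\mathfrak{p}} \cong S_{\mathfrak{q}}$ inside $\operatorname{Frac}S$, forcing the equality $R_{\mathfrak{p}} = S_{\mathfrak{q}}$. The depiction property of $R$ by $S$ (Definition \ref{depiction}) then supplies a maximal ideal $\mathfrak{n} \supseteq \mathfrak{q}$ with $R_{\mathfrak{n} \cap R} = S_{\mathfrak{n}}$, so $\mathfrak{n} \in \mathbb{V}(\mathfrak{q}) \cap U$.

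The main obstacle will be verifying in the forward direction that $\Phi$ extends consistently across the central localization: one must match elements inverted in $A_{\mathfrak{p}}$ (those in $Z \setminus \mathfrak{p}$) with those inverted in $A'_{\mathfrak{q}}$ (those in $Z' \setminus \mathfrak{q}$). This ultimately reduces to showing that, under the chain of identifications $Z/\operatorname{Nil}(Z) \cong R \subseteq S \cong Z'$, the preimage of $Z' \setminus \mathfrak{q}$ lands in $Z \setminus \mathfrak{p}$, which is built into the definition $\mathfrak{p} := \mathfrak{q} \cap R$ together with Lemma \ref{monomial not in m} guaranteeing enough non-vanishing monomial cycles to build the inverses $\delta^*$ of Proposition \ref{delta*}.
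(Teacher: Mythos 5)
Your proposal is correct in substance but takes a genuinely different route in the forward direction. The paper does not pass through a corner ring: it adjoins to $Q'$ a pair of invertible loops $\gamma_i, \gamma_i^*$ at each contracted vertex to form an auxiliary algebra $\widehat{A}$, exhibits an explicit algebra isomorphism $\Psi$ from $\mathsf{P} = A_{\mathfrak{p}}$ (Peirce-decomposed along $e = 1_A - e_{\operatorname{h}(\delta)}$) onto a $2\times 2$ block subalgebra of $M_2(\widehat{A}_{\mathfrak{q}})$ built from $\mathsf{Q} = A'_{\mathfrak{q}}$, and then writes down a surjective Morita context $(\mathsf{P},\mathsf{Q},M,N,\phi,\theta)$, invoking Garcia--Simon. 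Your full-idempotent argument is the mirror image: you realize $\mathsf{Q}$ as the full corner $e_V\mathsf{P}e_V$ of $\mathsf{P}$ rather than $\mathsf{P}$ as a ``blow-up'' of $\mathsf{Q}$. Both arguments rest on exactly the same three inputs --- Theorem \ref{=0} for injectivity of $\psi$ after localizing, Propositions \ref{delta*} and \ref{* subset} for invertibility of the contracted arrows, and the chain $Z_{\mathfrak{p}} \cong R_{\mathfrak{p}} = S_{\mathfrak{q}} \cong Z'_{\mathfrak{q}}$ --- so the mathematical content is identical; your packaging is shorter and avoids the auxiliary algebra, while the paper's produces the explicit bimodules $M$ and $N$ that are reused later (in Theorem \ref{Azumaya} and Remark \ref{psi rho}), a role your bimodules $A_{\mathfrak{p}}e_V$ and $e_VA_{\mathfrak{p}}$ would serve equally well. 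Your converse is the contrapositive of the paper's and shares its one soft spot: both pass from Morita invariance of the center to the \emph{equality} $R_{\mathfrak{p}} = S_{\mathfrak{q}}$ of subrings of $\operatorname{Frac}S$ (an abstract isomorphism of the centers does not formally give this), and both identify that equality with $\mathbb{V}(\mathfrak{q}) \cap U \neq \emptyset$ without detailed proof; note in particular that the depiction property as stated only gives surjectivity of $\operatorname{Max}S \to \operatorname{Max}R$, not directly a point of $U$ lying over $\mathfrak{q}$, so that step deserves a sentence more than you give it --- but you are on the same footing as the paper there.
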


\begin{proof}
Set
$$\mathsf{P} := A_{\mathfrak{p}} = A \otimes_Z Z_{\mathfrak{p}} \ \ \ \text{ and } \ \ \ \mathsf{Q} := A'_{\mathfrak{q}} = A' \otimes_{Z'} Z'_{\mathfrak{q}}.$$

First suppose $\mathcal{Z}(\mathfrak{q}) \cap U = \emptyset$.
Enumerate the contracted arrows,
$$Q_1^* = \left\{ \delta_1, \ldots, \delta_n \right\} \subset Q_1.$$
For $1 \leq i \leq n$, set
\begin{equation} \label{epsilon}
\epsilon_i := e_{\operatorname{h}(\delta_i)} \in Q_0 \ \ \ \text{ and } \ \ \ \epsilon'_i := \psi(\delta_i) \in Q'_0.
\end{equation}
Furthermore, set
\begin{equation} \label{epsilon2}
\epsilon_0 := 1_A - \sum_{i = 1}^n \epsilon_i \ \ \ \text{ and } \ \ \ \epsilon'_0 := 1_{A'}.
\end{equation}
Then $\mathsf{P}$ is isomorphic to the $n \times n$ tiled matrix algebra,
\begin{equation} \label{P iso}
\mathsf{P} \cong \left[ \epsilon_i \mathsf{P} \epsilon_j \right]_{ij}.
\end{equation}

Consider the $k$-linear map
\begin{equation} \label{P}
\zeta: \left[ \epsilon_i \mathsf{P} \epsilon_j \right]_{ij} \longrightarrow \left[ \epsilon'_i \mathsf{Q} \epsilon'_j \right]_{ij},
\end{equation}
defined by sending the $ij$-th entry $\alpha \in \epsilon_i \mathsf{P} \epsilon_j$ to the $ij$-th entry $\psi_{\mathfrak{p}}(\alpha) \in \epsilon'_i \mathsf{Q} \epsilon'_j$.
This map is an algebra isomorphism by Proposition \ref{homomorphism on corner}.
Furthermore, using the isomorphism (\ref{P iso}), we may view $\zeta$ as an algebra isomorphism 
$$\zeta: \mathsf{P} \stackrel{\cong}{\longrightarrow} \left[ \epsilon'_i \mathsf{Q} \epsilon'_j \right]_{ij}.$$

Now consider the bimodules
\begin{equation} \label{M}
_{\mathsf{P}}M_{\mathsf{Q}} = \left[ \begin{array}{c} \epsilon'_0 \mathsf{Q} \\ \epsilon'_1 \mathsf{Q} \\ \vdots  \\ \epsilon'_n \mathsf{Q} \end{array} \right] \ \ \text{ and } \ \ 
_{\mathsf{Q}}N_{\mathsf{P}} = \left[ \begin{array}{cccc} \mathsf{Q}\epsilon'_0 & \mathsf{Q} \epsilon'_1 & \cdots & \mathsf{Q}\epsilon'_n \end{array} \right],
\end{equation}
where $\mathsf{P}$ acts via the isomorphism $\mathsf{P} \cong \zeta(\mathsf{P})$.
The $\mathsf{P},\mathsf{P}$-bimodule homomorphism
$$\theta: M \otimes_{\mathsf{Q}} N \to \zeta(\mathsf{P}) \cong \mathsf{P}$$
defined by
$$\left[ \begin{array}{c} \epsilon'_0 s_0 \\ \epsilon'_1 s_1 \\ \vdots \\ \epsilon'_n s_n \end{array} \right] \otimes \left[ \begin{array}{cccc} t_0 \epsilon'_0 & t_1 \epsilon'_1 & \cdots & t_n\epsilon'_n \end{array} \right] \mapsto 
\left[ \epsilon'_i s_i t_j \epsilon'_j \right]_{ij}$$ 
is clearly surjective.
Furthermore, the $\mathsf{Q},\mathsf{Q}$-bimodule homomorphism
\begin{equation} \label{phi}
\phi: N \otimes_{\mathsf{P}} M \to \mathsf{Q}
\end{equation}
defined by
$$\left[ \begin{array}{cccc} t_0 \epsilon'_0 & t_1 \epsilon'_1 & \cdots & t_n\epsilon'_n \end{array} \right] \otimes \left[ \begin{array}{c} \epsilon'_0 s_0 \\ \epsilon'_1 s_1 \\ \vdots \\ \epsilon'_n s_n \end{array} \right] \mapsto \sum_{i = 0}^n t_i\epsilon'_i \epsilon'_i s_i = \sum_{i = 0}^n t_is_i$$
is also surjective.
Thus, since $\mathsf{P}$ and $\mathsf{Q}$ are unital, $\theta$ and $\phi$ are bimodule isomorphisms \cite[Lemma 4.5.2]{C},
$$\mathsf{P} \cong M \otimes_{\mathsf{Q}} N \ \ \ \text{ and } \ \ \ \mathsf{Q} \cong N \otimes_{\mathsf{P}} M.$$
Therefore $\mathsf{P}$ and $\mathsf{Q}$ are Morita equivalent, with progenerators $N$ and $M$.

Conversely, suppose $\mathcal{Z}(\mathfrak{q}) \cap U = \emptyset$. 
Then $\hat{Z}_{\hat{\mathfrak{p}}} \not = S_{\mathfrak{q}}$.
Furthermore, $S \cong Z'$ by (\ref{Z' S}).
Whence the centers of $\mathsf{P}$ and $\mathsf{Q}$ are not isomorphic:
$$Z(\mathsf{P}) = \hat{Z}_{\hat{\mathfrak{p}}} \not = S_{\mathfrak{q}} \cong Z(\mathsf{Q}).$$
But $\mathsf{P}$ and $\mathsf{Q}$ are unital.
Therefore $\mathsf{P}$ and $\mathsf{Q}$ are not Morita equivalent \cite[Theorem 5.9.iii]{McR}. 
\end{proof}

Although $Z$ may not be not be reduced, its reduction $\hat{Z} = Z/\operatorname{nil}Z$ is an integral domain \cite[Corollary 4.28]{B2}.

\begin{Corollary} \label{main corollary} \
\begin{enumerate}
 \item The (noncommutative) function fields 
$$A \otimes_Z \operatorname{Frac}\hat{Z}, \ \ \ A' \otimes_{Z'} \operatorname{Frac}Z', \ \ \ \operatorname{Frac}\hat{Z}, \ \ \ \operatorname{Frac}Z',$$
are Morita equivalent.
 \item If $\mathcal{Z}(\mathfrak{q}) \cap U \not = \emptyset$, then the noncommutative residue fields
$$A_{\mathfrak{p}}/\mathfrak{p} \ \ \ \text{ and } \ \ \ A'_{\mathfrak{q}}/\mathfrak{q}$$
are Morita equivalent.
\end{enumerate}
\end{Corollary}

\begin{proof}
(1) We have the following Morita equivalences:
$$A \otimes_Z \operatorname{Frac}\hat{Z} 
\ \stackrel{\textsc{(i)}}{\sim} \ A' \otimes_{Z'} \operatorname{Frac}Z' 
\ \stackrel{\textsc{(ii)}}{\sim} \ \operatorname{Frac}Z'
\ \stackrel{\textsc{(iii)}}{=} \ \operatorname{Frac}S
\ \stackrel{\textsc{(iv)}}{=} \ \operatorname{Frac}\hat{Z}.$$
Indeed, (\textsc{i}) follows from Theorem \ref{main1}.
(\textsc{ii}) holds since $A'$ is a cancellative, whence a noncommutative crepant resolution, and thus an endomorphism ring of a finitely generated projective $Z'$-module.
(\textsc{iii}) holds by (\ref{Z' S}).
Finally, (\textsc{iv}) holds since $S$ is a depiction of $\hat{Z}$ \cite[Theorem 4.68.1]{B2}.

(2) Recall the bimodule isomorphism $\phi$ defined in (\ref{phi}).
Since $\mathcal{Z}(\mathfrak{q}) \cap U \not = \emptyset$, the localizations $A_{\mathfrak{p}}$ and $A'_{\mathfrak{q}}$ are Morita equivalent by Theorem \ref{main1}.
Whence 
$$A_{\mathfrak{p}}/\mathfrak{p} \ \ \ \text{ and } \ \ \ A'_{\mathfrak{q}}/\phi( N \otimes \mathfrak{p}A_{\mathfrak{p}}M)$$ 
are Morita equivalent \cite[Theorem 5.9.ii]{McR}.
Furthermore, for $0 \leq i,j \leq n$,
\begin{equation} \label{epsiloni}
\psi_{\mathfrak{p}}(\epsilon_i \mathfrak{p}A_{\mathfrak{p}} \epsilon_j) = \epsilon'_i \mathfrak{q}A'_{\mathfrak{q}} \epsilon'_j,
\end{equation}
by Proposition \ref{homomorphism on corner}. 
Thus
$$\begin{array}{rcl}
\phi( N \otimes \mathfrak{p}A_{\mathfrak{p}} M) & = & N \cdot \zeta(\mathfrak{p}A_{\mathfrak{p}}) \cdot M \\
& = & \sum_{0 \leq i,j \leq n} A'_{\mathfrak{q}} \epsilon'_i \cdot \psi_{\mathfrak{p}}(\epsilon_i \mathfrak{p}A_{\mathfrak{p}} \epsilon_j) \cdot \epsilon'_j A'_{\mathfrak{q}} \\
& = & \sum_{0 \leq i,j \leq n} A'_{\mathfrak{q}} \epsilon'_i \cdot \epsilon'_i \mathfrak{q} A'_{\mathfrak{q}} \epsilon'_j \cdot \epsilon'_j A'_{\mathfrak{q}}\\
& = & \mathfrak{q} \sum_{0 \leq i,j \leq n} A'_{\mathfrak{q}} \epsilon'_i A'_{\mathfrak{q}} \epsilon'_j A'_{\mathfrak{q}}\\
& = & \mathfrak{q} A'_{\mathfrak{q}}.
\end{array}$$
Therefore $A_{\mathfrak{p}}/\mathfrak{p}$ and $A'_{\mathfrak{q}}/\mathfrak{q}$ are Morita equivalent.
\end{proof}

By \cite[Theorems 4.17 and 4.45]{B2}, $A$ is often not prime and nonnoetherian.
However, we have the following.

\begin{Corollary} \label{noetherian}
If $\mathcal{Z}(\mathfrak{q}) \cap U \not = \emptyset$, then the localization $A_{\mathfrak{p}}$ is prime and noetherian.
\end{Corollary}

\begin{proof}
Since $A'$ is a cancellative dimer algebra, $A'$ is prime and noetherian \cite[Theorem 3.3.3 and Proposition 3.11]{B2}.
Thus $A'_{\mathfrak{q}}$ is prime and noetherian.
But $A_{\mathfrak{p}}$ is Morita equivalent to $A'_{\mathfrak{q}}$ by Theorem \ref{main1}.
Therefore $A_{\mathfrak{p}}$ is prime and noetherian as well \cite[Proposition 5.10]{McR}.

The fact that $A_{\mathfrak{p}}$ is prime also follows directly from Lemma \ref{prime0} below and the proof of \cite[Proposition 3.11, with $A$ and $\tau$ replaced by $A_{\mathfrak{p}}$ and $\tilde{\tau}_{\mathfrak{p}}$]{B2}.
\end{proof}

\section{Azumaya loci} \label{Azumaya section}

Throughout, $A$ is a non-cancellative dimer algebra; $\psi: A \to A'$ is a cyclic contraction; and unless stated otherwise, $\mathfrak{q} \in \operatorname{Spec}S$ satisfies 
$$\mathcal{Z}(\mathfrak{q}) \cap U \not = \emptyset.$$
Set $\hat{\mathfrak{p}} := \mathfrak{q} \cap \hat{Z} \in \operatorname{Spec}\hat{Z}$.
For brevity we will denote by $\hat{Z}_{\hat{\mathfrak{p}}}/\hat{\mathfrak{p}}$ and $A_{\mathfrak{p}}/\mathfrak{p}$ the respective quotients $\hat{Z}_{\hat{\mathfrak{p}}}/\hat{\mathfrak{p}}\hat{Z}_{\hat{\mathfrak{p}}}$ and $A_{\mathfrak{p}}/\mathfrak{p}A_{\mathfrak{p}}$.

\subsection{Azumaya and smooth loci}

Recall the algebra monomorphism
$$\tau: A' \to M_{|Q'_0|}(B)$$ 
defined in (\ref{tau}).
Similarly, there is an algebra homomorphism 
\begin{equation} \label{eta}
\tilde{\tau}: A \to M_{|Q_0|}(B)
\end{equation}
defined on $p \in e_jAe_i$ by
$$p \mapsto \overbar{p} E_{ji} = \bar{\tau}\psi(p) E_{ji},$$
and extended $k$-linearly to $A$ \cite[Lemma 4.25]{B2}.
In contrast to $\tau$, $\tilde{\tau}$ is not injective \cite[Lemma 4.12]{B2}.
However, we have the following.

\begin{Lemma} \label{prime0}
There is some $\mathfrak{b} \in \operatorname{Max}B$ such that the algebra homomorphism $\tilde{\tau}$ induces an algebra monomorphism
$$\tilde{\tau}_{\mathfrak{p}}: A_{\mathfrak{p}} \to M_{|Q_0|}\left(B_{\mathfrak{b}} \right).$$
\end{Lemma}

\begin{proof}
(i) We first claim that $\tilde{\tau}_{\mathfrak{p}}$ is well-defined.
Indeed, since $A'$ is cancellative, $Z' \cong S$ by (\ref{Z' S}).
Furthermore, since $(\tau,B)$ is an impression of $A'$, the morphism $\operatorname{Max}B \to \operatorname{Max}Z' = \operatorname{Max}S$ is surjective \cite[Theorem 3.5]{B2}. 
Therefore there is an ideal $\mathfrak{b} \in \operatorname{Spec}B$ such that $\mathfrak{b} \cap S = \mathfrak{q}$ \cite[Lemma 2.15]{B}.
In particular, 
$$\mathfrak{b} \cap \hat{Z} = \hat{\mathfrak{p}}.$$
The claim then follows since $Z_{\mathfrak{p}} = \hat{Z}_{\hat{\mathfrak{p}}}$ by Lemma \ref{central iso}.

(ii) We now claim that $\tilde{\tau}_{\mathfrak{p}}$ is injective.
The $k$-linear map
$$\bar{\tau}: e_jA'e_i \to B$$ 
is injective for each $i,j \in Q_0$, by \cite[Theorem 3.5]{B2}.
Thus the kernel of $\tilde{\tau}$ is generated by elements of the form $p-q$, where $\psi(p) = \psi(q)$.

If either $\operatorname{t}(p) \not = \operatorname{t}(q)$ or $\operatorname{h}(p) \not = \operatorname{h}(q)$, then 
$$\tilde{\tau}(p) \propto E_{\operatorname{h}(p), \operatorname{t}(p)} \ \ \text{ and } \ \ \tilde{\tau}(q) \propto E_{\operatorname{h}(q), \operatorname{t}(q)}$$
have distinct non-zero matrix entries.
Whence $p-q \not \in \operatorname{ker}\tilde{\tau}$.
Thus if $p-q \in \operatorname{ker}\tilde{\tau}$, then $\operatorname{t}(p) = \operatorname{t}(q)$ and $\operatorname{h}(p) = \operatorname{h}(q)$. 
But then $p = q$ in $A_{\mathfrak{p}}$, by Theorem \ref{=0}.
Therefore $\tilde{\tau}_{\mathfrak{p}}$ is injective.
\end{proof}

\begin{Lemma} \label{same}
For each $i,j \in Q_0$,
$$\bar{\tau}\psi_{\mathfrak{p}}\left(e_iA_{\mathfrak{p}}e_i \right) = \bar{\tau}\psi_{\mathfrak{p}}\left( e_jA_{\mathfrak{p}} e_j \right).$$
\end{Lemma}

\begin{proof}
We have
$$\bar{\tau}\psi_{\mathfrak{p}}(e_iA_{\mathfrak{p}}e_i) \stackrel{\textsc{(i)}}{=} 
\bar{\tau}(e_{\psi(i)}A'_{\mathfrak{q}}e_{\psi(i)}) \stackrel{\textsc{(ii)}}{=}
\bar{\tau}(e_{\psi(j)}A'_{\mathfrak{q}}e_{\psi(j)}) \stackrel{\textsc{(iii)}}{=}
\bar{\tau}\psi_{\mathfrak{p}}(e_jA_{\mathfrak{p}}e_j).$$
Indeed, (\textsc{i}) and (\textsc{iii}) hold by Proposition \ref{homomorphism on corner}.2, and (\textsc{ii}) holds by \cite[Theorem 3.3.2]{B2}.
\end{proof}

\begin{Proposition} \label{module-finite}
$A_{\mathfrak{p}}$ is a finitely generated module over its center $Z_{\mathfrak{p}}$.
\end{Proposition}

\begin{proof}
$A_{\mathfrak{p}}$ is generated as a $Z_{\mathfrak{p}}$-module by all paths of length at most $|Q_0|$ by Lemma \ref{same} and \cite[second paragraph of proof of Theorem 2.11 with $e_iAe_i = Ze_i$ replaced by $e_iA_{\mathfrak{p}}e_i \subseteq Z_{\mathfrak{p}}e_i$]{B}.
Therefore $A_{\mathfrak{p}}$ is a finitely generated $Z_{\mathfrak{p}}$-module.
\end{proof}

By \cite[Theorem 4.50]{B2}, $A$ is nonnoetherian and an infinitely generated $Z$-module.
In contrast, we have the following.

\begin{Theorem} \label{Azumaya 1}
Suppose $\mathcal{Z}(\mathfrak{q}) \cap U \not = \emptyset$.
Then the localized algebra $A_{\mathfrak{p}}$ is prime, noetherian, and a finitely generated module over its center $Z_{\mathfrak{p}}$ with PI degree $|Q_0|$.
\end{Theorem}

\begin{proof}
$A_{\mathfrak{p}}$ is prime, noetherian, and a finitely generated module over its center by Propositions \ref{noetherian} and \ref{module-finite} respectively.
Furthermore, the algebra homomorphism $\tilde{\tau}_{\mathfrak{p}}$ is injective by Lemma \ref{prime0}.
Thus the PI degree of $A_{\mathfrak{p}}$ is $|Q_0|$ by \cite[Lemma 2.4, with $A$, $U$, $\tau_{\mathfrak{q}}$ replaced respectively by $A_{\mathfrak{p}}$, $\left\{ \mathfrak{b} \right\}$, $\tilde{\tau}_{\mathfrak{p}}$]{B}.
\end{proof}

\begin{Lemma} \label{not Azumaya lemma}
Suppose $\mathfrak{n} \not \in U$, and set $\hat{\mathfrak{m}} = \mathfrak{n} \cap \hat{Z}$.
Then $A_{\mathfrak{m}}$ is not an Azumaya algebra.
\end{Lemma}

\begin{proof}
Suppose the hypotheses hold.
Since $\mathfrak{n} \not \in U$, we have $\mathfrak{m} = \mathfrak{z}_0$ by (\ref{U z0}).

Recall that if $A_{\mathfrak{m}}$ is an Azumaya algebra, then $A_{\mathfrak{m}}/\mathfrak{m}$ is a central simple algebra over $k$ \cite[Proposition 7.11]{McR} (that is, a simple algebra whose center is $k$).
We claim that $A_{\mathfrak{z}_0}/\mathfrak{z}_0$ is not a central simple algebra.
Indeed, since $A$ is non-cancellative and $A'$ is cancellative, the contraction $\psi: A \to A'$ is non-trivial.
Thus there is at least one arrow $\delta \in Q_1$ which is contracted to a vertex.
By \cite[Lemma 4.8.1]{B2}, no cycle is contracted to a vertex.
In particular, $\delta$ is not a loop.
Whence $|Q'_0| < |Q_0|$.
Thus $|Q_0| \geq 2$.
Therefore there are at least two distinct vertex idempotents $e_1, e_2 \in A$. 

Clearly $e_1,e_2 \not \in \mathfrak{z}_0A_{\mathfrak{z}_0}$. 
Whence the (two-sided) ideal $\left\langle e_1 \right\rangle$ of $A_{\mathfrak{z}_0}/\mathfrak{z}_0$ is nonzero.
Furthermore, $\left\langle e_1 \right\rangle$ is a proper ideal since $e_2 \not \in \left\langle e_1 \right\rangle$. 
Therefore $A_{\mathfrak{z}_0}/\mathfrak{z}_0$ is not a simple algebra, and so $A_{\mathfrak{z}_0}$ is not Azumaya. 
\end{proof}

\begin{Theorem} \label{Azumaya}
The Azumaya locus $\mathcal{A} \subset \operatorname{Max}Z$ of $A$ coincides with the intersection of the Azumaya locus $\mathcal{A}' \subset \operatorname{Max}Z'$ of $A'$ and the locus $U \subset \operatorname{Max}Z'$,
$$\mathcal{A} \cong \mathcal{A}' \cap U.$$
This isomorphism is defined by sending $\mathfrak{n} \in \mathcal{A}' \cap U$ to $\mathfrak{m} \in \mathcal{A}$, where $\hat{\mathfrak{m}} = \mathfrak{n} \cap \hat{Z}$.
\end{Theorem}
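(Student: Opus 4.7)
The plan is to analyze the map $\Phi \colon \mathcal{A}' \cap U \to \operatorname{Max}R$, $\mathfrak{n} \mapsto \mathfrak{n} \cap R$, and show that it is injective, that its image sits inside $\mathcal{A}$, and that every point of $\mathcal{A}$ arises this way. Injectivity on $U$ is immediate from the definition: for $\mathfrak{n} \in U$ the equality $R_{\mathfrak{n} \cap R} = S_{\mathfrak{n}}$ lets one recover $\mathfrak{n}$ as the contraction back to $S$ of the unique maximal ideal of $R_{\mathfrak{n} \cap R}$, so two different points of $U$ cannot share a contraction to $R$.

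For the forward inclusion, fix $\mathfrak{n} \in \mathcal{A}' \cap U$ and set $\mathfrak{m} := \mathfrak{n} \cap R$. Proposition \ref{Azumaya 1} makes $A_{\mathfrak{m}}$ prime, noetherian, and module-finite over $Z_{\mathfrak{m}}$ with PI degree $|Q_0|$, so by \cite[Proposition 3.1]{BG} it suffices to check that $A_{\mathfrak{m}}/\mathfrak{m}A_{\mathfrak{m}} \cong M_{|Q_0|}(k)$. Theorem \ref{main1} supplies a Morita equivalence $A_{\mathfrak{m}} \sim A'_{\mathfrak{n}}$ over the centers identified by (\ref{Z Z}); since $\mathfrak{m}$ and $\mathfrak{n}$ match as the unique maximal ideals of this common local center, a base-change argument gives a Morita equivalence $A_{\mathfrak{m}}/\mathfrak{m} \sim A'_{\mathfrak{n}}/\mathfrak{n}$. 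The right-hand side is $M_{|Q'_0|}(k)$ because $\mathfrak{n} \in \mathcal{A}'$, hence Morita equivalent to $k$; therefore $A_{\mathfrak{m}}/\mathfrak{m}$ is also Morita equivalent to $k$, hence a matrix algebra, and PI-degree matching pins it down as $M_{|Q_0|}(k)$.

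For the reverse inclusion, given $\mathfrak{m} \in \mathcal{A}$ I would first show $\sigma \notin \mathfrak{m}$, then build the unique $\mathfrak{n} \in U$ above $\mathfrak{m}$ by running the construction of Lemma \ref{monomial not in m}(a) from any monomial in $R \setminus \mathfrak{m}$ not divisible by $\sigma$, and finally reverse the forward Morita argument to place $\mathfrak{n}$ in $\mathcal{A}'$. Showing $\sigma \notin \mathfrak{m}$ is the main obstacle. Suppose toward contradiction that $\sigma \in \mathfrak{m}$; then each unit cycle $\sigma_i$ vanishes in the Azumaya quotient $A_{\mathfrak{m}}/\mathfrak{m}A_{\mathfrak{m}} \cong M_d(k)$, which exists as such because the residue field is algebraically closed. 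The resulting simple $A$-module $V_{\rho}$ satisfies $\rho(\sigma_i) = 0$ at every vertex, and combining Lemma \ref{second}, Lemma \ref{phew}, and Proposition \ref{yay.1} with the module-finiteness forced by the Azumaya hypothesis should force the image $\rho(A)$ to be a proper subalgebra of $M_d(k)$, contradicting faithfulness. Since the dimension vector of $V_{\rho}$ is not known a priori (Theorem F is a forward reference, ultimately dependent on this theorem), the combinatorial ingredient from Lemma \ref{second} must be applied to an unspecified dimension vector and sharpened; this bookkeeping is where I expect the subtlest work to lie. Once $\sigma \notin \mathfrak{m}$ is established, the lifting of $\mathfrak{m}$ to $\mathfrak{n} \in U$ and the identification $\mathfrak{n} \in \mathcal{A}'$ proceed by direct reversal of the arguments used for injectivity and the forward inclusion.
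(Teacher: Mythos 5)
Your injectivity claim and the forward inclusion ($\mathfrak{n} \in \mathcal{A}' \cap U \Rightarrow \mathfrak{n} \cap R \in \mathcal{A}$) are essentially the paper's argument: Proposition \ref{Azumaya 1}, the Morita equivalence of Theorem \ref{main1}, and a base change to the residue field of the common local center. The paper carries out that base change carefully via Lemmas \ref{little} and \ref{most} and closes with the Artin--Procesi theorem rather than \cite[Proposition 3.1]{BG}, but these are interchangeable here. That half of your proposal is sound.

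The reverse inclusion is where you have a genuine gap, and your proposed route is both incomplete and harder than necessary. You reduce it to showing $\sigma \notin \mathfrak{m}$ for $\mathfrak{m} \in \mathcal{A}$ and concede that the key step is not carried out. Moreover, even granting $\sigma \notin \mathfrak{m}$, your plan to ``run the construction of Lemma \ref{monomial not in m}(a)'' to produce $\mathfrak{n} \in U$ over $\mathfrak{m}$ reverses an implication that only goes one way: that lemma \emph{assumes} $\mathfrak{n} \in U$ and deduces the existence of a single monomial in $R \setminus \mathfrak{m}$ prime to $\sigma$, whereas membership in $U$ requires that $R_{\mathfrak{m}}$ recover \emph{all} of $S_{\mathfrak{n}}$; nothing in the paper establishes that converse. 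The paper sidesteps $\sigma$ entirely. Since $R$ is depicted by $S$, every $\mathfrak{m} \in \operatorname{Max}Z \cong \operatorname{Max}R$ equals $\mathfrak{n} \cap R$ for some $\mathfrak{n} \in \operatorname{Max}S$, and if $\mathfrak{n} \notin U$ then $R_{\mathfrak{m}} \subsetneq S_{\mathfrak{n}}$ forces $A_{\mathfrak{m}}$ to fail to be a finitely generated $Z_{\mathfrak{m}}$-module (the localized form of the lemma opening Section \ref{Azumaya section}). An Azumaya algebra is by definition (\ref{Azumaya5}) a finitely generated projective module over its center, so such $\mathfrak{m}$ cannot lie in $\mathcal{A}$. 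Hence any $\mathfrak{m} \in \mathcal{A}$ lies under some $\mathfrak{n} \in U$, and your forward equivalence then places that $\mathfrak{n}$ in $\mathcal{A}'$. Replacing your $\sigma$-analysis with this module-finiteness obstruction closes the gap; as written, your argument for the reverse inclusion does not go through.
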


\begin{proof}
Let $\mathfrak{n} \in \operatorname{Max}Z' = \operatorname{Max}S$, and set $\hat{\mathfrak{m}} := \mathfrak{n} \cap \hat{Z}$. 

If $\mathfrak{n} \not \in U$, then $A_{\mathfrak{m}}$ is not an Azumaya algebra by Lemma \ref{not Azumaya lemma}.
So suppose $\mathfrak{n} \in U$. 
Then $A_{\mathfrak{m}}/\mathfrak{m}$ and $A'_{\mathfrak{n}}/\mathfrak{n}$ are Morita equivalent by Corollary \ref{main corollary}.2.
In particular, $A_{\mathfrak{m}}/\mathfrak{m}$ is central simple over $k$ if and only if $A'_{\mathfrak{n}}/\mathfrak{n}$ is central simple over $k$.
Since $\mathfrak{n} \in U$, $A_{\mathfrak{m}}$ and $A'_{\mathfrak{n}}$ are both prime, noetherian, and finitely generated modules over their centers with PI degrees $|Q_0|$ and $|Q'_0|$ respectively, by Proposition \ref{Azumaya 1}.
Therefore $A_{\mathfrak{m}}$ is Azumaya if and only if $A'_{\mathfrak{n}}$ is Azumaya by the Artin-Processi Theorem \cite[Theorem 13.7.14]{McR}.
\end{proof}

The following corollary gives the first known class of algebras that are nonnoetherian and infinitely generated modules over their centers, with the property that their Azumaya and smooth loci coincide.
The $Y^{p,q}$ dimer algebras are defined in \cite[Example 1.3]{B}.

\begin{Corollary} \label{Azumaya corollary}
If the Azumaya and smooth loci of $A'$ coincide, then the Azumaya and smooth loci of $A$ coincide.
In particular, if $A'$ is a $Y^{p,q}$ algebra then the Azumaya and smooth loci of $A$ coincide.
\end{Corollary}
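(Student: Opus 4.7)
The plan is to combine Theorem D with a parallel identification of the smooth locus of $A$ as a subset of $\operatorname{Max}R$, so that the corollary reduces to the set-theoretic identity $\mathcal{A}' \cap U = \mathcal{S}' \cap U$.

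Write $\mathcal{S}$ and $\mathcal{S}'$ for the smooth loci of $A$ and $A'$, viewed as subsets of $\operatorname{Max}Z \cong \operatorname{Max}R$ and $\operatorname{Max}Z' \cong \operatorname{Max}S$ respectively, where $\mathfrak{m}$ is smooth when the commutative local ring at $\mathfrak{m}$ is regular. First I would prove the smooth-locus analogue of Theorem D, namely $\mathcal{S} \cong \mathcal{S}' \cap U$ under the surjection $\mathfrak{n} \mapsto \mathfrak{n} \cap R$. For $\mathfrak{n} \in U$, the defining equality $R_{\mathfrak{n} \cap R} = S_{\mathfrak{n}}$ makes regularity transfer in both directions, so $\mathcal{S}' \cap U$ maps into $\mathcal{S}$. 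For the converse inclusion I would show that if $\mathfrak{m} \in \operatorname{Max}R$ is not of the form $\mathfrak{n} \cap R$ for any $\mathfrak{n} \in U$, then $R_{\mathfrak{m}}$ is not noetherian and hence cannot be regular. This is the local-ring incarnation of the geometric picture (Section~1 and \cite[Section 2]{B2}) in which $\operatorname{Max}R$ is $\operatorname{Max}S$ with a positive-dimensional subvariety collapsed to a single closed point: at a collapsed $\mathfrak{m}$ the coordinate functions on the identified subvariety all lie in $\mathfrak{m}R_{\mathfrak{m}}$ and produce an infinite ascending chain of ideals.

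With this identification established, the corollary falls out in one line. Assuming $\mathcal{A}' = \mathcal{S}'$, intersect with $U$ to obtain $\mathcal{A}' \cap U = \mathcal{S}' \cap U$; then Theorem D identifies the left-hand side with $\mathcal{A}$, and the smooth-locus identification above identifies the right-hand side with $\mathcal{S}$, giving $\mathcal{A} = \mathcal{S}$. The $Y^{p,q}$ conclusion then follows immediately upon invoking \cite[Example 1.3]{B} to see that $Y^{p,q}$ dimer algebras themselves satisfy $\mathcal{A}' = \mathcal{S}'$.

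The main obstacle will be the singular direction of the smooth-locus identification, that is, verifying that every collapsed maximal ideal of $R$ is singular. I expect this to reduce to showing that $R_{\mathfrak{m}}$ is nonnoetherian whenever no $\mathfrak{n} \in U$ contracts to $\mathfrak{m}$, which should follow directly from the depiction of $R$ by $S$ recorded in \cite[Theorem 4.15]{B2}. Everything else is formal manipulation of Theorem D and the definition of $U$.
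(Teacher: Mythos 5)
Your proposal is correct and follows essentially the same route as the paper: the paper's proof likewise observes that for $\mathfrak{n}\notin U$ the local ring $R_{\mathfrak{m}}$ is nonnoetherian (so the residue field has infinite projective dimension and $\mathfrak{m}$ is singular), while on $U$ the equality $R_{\mathfrak{m}}=S_{\mathfrak{n}}$ transfers regularity, and then everything is combined with Theorem D. The only nit is the final citation: the fact that $Y^{p,q}$ algebras have coinciding Azumaya and smooth loci is \cite[Theorem 7.3]{B}, not \cite[Example 1.3]{B}, which merely defines them.
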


\begin{proof}
Suppose $\mathfrak{n} \not \in U$.
Then $\hat{\mathfrak{z}}_0 = \mathfrak{n} \cap \hat{Z}$ by (\ref{U z0}).
Thus $\hat{Z}_{\hat{\mathfrak{z}}_0}$ is an infinitely generated $k$-algebra \cite[Lemma 4.55]{B2}.
Whence the residue field $\hat{Z}_{\hat{\mathfrak{z}}_0}/\hat{\mathfrak{z}}_0$ has infinite projective dimension over $\hat{Z}_{\hat{\mathfrak{z}}_0}$.
Therefore $\hat{\mathfrak{z}}_0$ is a singular point of $\hat{Z}$.
The corollary then follows from Theorem \ref{Azumaya}.
If $A'$ is a $Y^{p,q}$ algebra, then its Azumaya and smooth loci coincide by \cite[Theorem 7.3]{B}.
\end{proof}

\begin{Proposition} \label{Azumaya example}
The Azumaya locus $\mathcal{A}'$ and the locus $U$ are distinct in general.
\end{Proposition}

\begin{proof}
Consider the cyclic contraction $\psi: A \to A'$ given in Figure \ref{Q}, with $Q_1^* = \left\{ \delta \right\}$.
Further, consider the cycles $p,q \in A$ drawn in red and blue respectively.
Let $V_{\rho}$ and $V_{\rho'}$ be the simple $A$- and $A'$-modules of dimensions $1^{Q_0}$ and $1^{Q'_0}$ defined by 
$$\rho(a) := \left\{ \begin{array}{cl} 1 & \text{ if } a \text{ is a subpath of } p \\ 0 & \text{ otherwise }\end{array} \right. \ \ \ \text{ for } a \in Q_0 \cup Q_1,$$ 
$$\rho'(a) := \left\{ \begin{array}{cl} 1 & \text{ if } a \text{ is a subpath of } \psi(p) \\ 0 & \text{ otherwise }\end{array} \right. \ \ \ \text{ for } a \in Q'_0 \cup Q'_1.$$
Here, we are viewing $\rho$ and $\rho'$ as vector space diagrams on $Q$ and $Q'$ respectively.
(In particular, for any path $p$ in $Q$, $\rho(p)$ is a scalar rather than a $|Q_0| \times |Q_0|$ matrix.)

Set 
$$\hat{\mathfrak{m}} := \operatorname{ann}_{\hat{Z}} V_{\rho} \in \operatorname{Max} \hat{Z} \ \ \text{ and } \ \ \mathfrak{n} := \operatorname{ann}_{Z'} V_{\rho'} \in \operatorname{Max}Z'.$$
Then $\hat{\mathfrak{m}} = \mathfrak{n} \cap \hat{Z}$ under the isomorphism $Z' \cong S$.
We claim that $\mathfrak{n}$ is in $U$, but not in the Azumaya locus of $A'$.

To show that $\mathfrak{n} \in U$, it suffices to show that if $\overbar{s}$ is a monomial in $S \setminus \hat{Z}$, then $\overbar{s}$ is also in $\hat{Z}_{\hat{\mathfrak{m}}}$. 
Since $\sigma \in \hat{Z}$, we may suppose $s$ is a cycle in $\mathcal{C}^u$ with $u \in \mathbb{Z}^2 \setminus 0$.

First note that the only cycle in $\mathcal{C}^u$ which does not share a vertex subpath with $p$ is the cycle $q$ (drawn in blue).
But $\overbar{q} = \overbar{p}$ by \cite[Theorem 3.7]{B}.
So consider a cycle $s \in \mathcal{C}^u$ which shares a vertex subpath with $p$, say at vertex $i \in Q_0$.
(For example, we may take $s$ to be the green cycle in the figure.)
Denote by $p_i$ and $s_i$ the cyclic permutations of $p$ and $s$ with tails at $i$.
Then by the symmetry of $Q$, it is clear that
$$\overbar{s} \overbar{p} = \overbar{s}_i\overbar{p}_i = \overbar{s_ip_i} \in R.$$

Furthermore, it is straightforward to verify that there are no central elements in the kernel of $\psi$ in this example.
In particular, $\operatorname{nil}Z = 0$ by \cite[Theorem 4.24]{B2}.
Thus $\hat{Z} \cong R$.
Consequently, $\overbar{s} \overbar{p} \in \hat{Z}$. 
Since $p$ does not annihilate $V_{\rho}$, the monomial $\overbar{p}$ is in $\hat{Z} \setminus \hat{\mathfrak{m}}$.
Thus 
$$\overbar{s} = \overbar{s}\overbar{p} \overbar{p}^{-1} \in \hat{Z}_{\hat{\mathfrak{m}}}.$$
Therefore $\hat{Z}_{\hat{\mathfrak{m}}} = S_{\mathfrak{n}}$.
Whence $\mathfrak{n} \in U$.

Finally, $\mathfrak{n}$ is not in the Azumaya locus of $A'$ since the dimension vector of any simple $A'$-module of maximal $k$-dimension is $1^{Q'_0}$ \cite[Proposition 2.5, Lemma 2.13]{B}.
\end{proof}

\begin{figure}
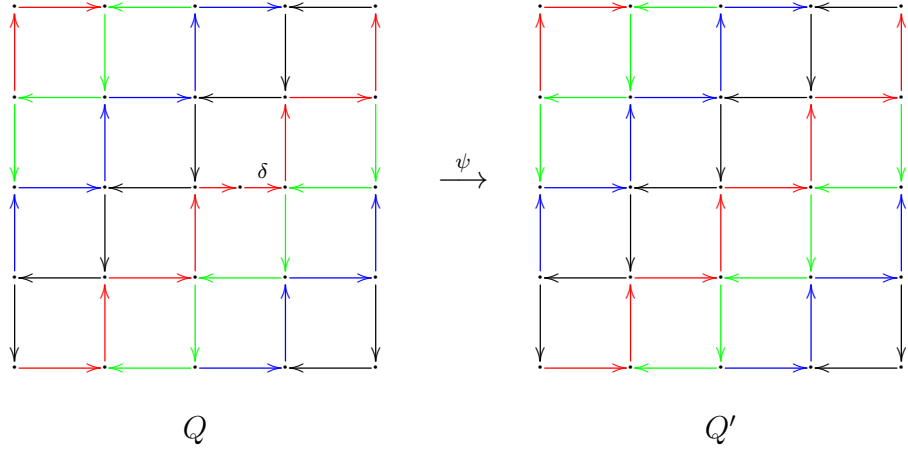

$$\begin{array}{ccc}
\xy
(-24,-24)*{\cdot}="1";(-12,-24)*{\cdot}="2";(0,-24)*{\cdot}="3";(12,-24)*{\cdot}="4";(24,-24)*{\cdot}="5";
(-24,-12)*{\cdot}="6";(-12,-12)*{\cdot}="7";(0,-12)*{\cdot}="8";(12,-12)*{\cdot}="9";(24,-12)*{\cdot}="10";
(-24,0)*{\cdot}="11";(-12,0)*{\cdot}="12";(0,0)*{\cdot}="13";(12,0)*{\cdot}="14";(24,0)*{\cdot}="15";
(-24,12)*{\cdot}="16";(-12,12)*{\cdot}="17";(0,12)*{\cdot}="18";(12,12)*{\cdot}="19";(24,12)*{\cdot}="20";
(-24,24)*{\cdot}="21";(-12,24)*{\cdot}="22";(0,24)*{\cdot}="23";(12,24)*{\cdot}="24";(24,24)*{\cdot}="25";
(6,0)*{\cdot}="26";
{\ar@[red]"13";"26"};{\ar^{\delta}@[red]"26";"14"};
{\ar@[red]"1";"2"};{\ar@[green]"3";"2"};{\ar@[blue]"3";"4"};{\ar@{->}"5";"4"};
{\ar@{->}"7";"6"};{\ar@[red]"7";"8"};{\ar@[green]"9";"8"};{\ar@[blue]"9";"10"};
{\ar@[blue]"11";"12"};{\ar@{->}"13";"12"};{\ar@[green]"15";"14"};
{\ar@[green]"17";"16"};{\ar@[blue]"17";"18"};{\ar@{->}"19";"18"};{\ar@[red]"19";"20"};
{\ar@[red]"21";"22"};{\ar@[green]"23";"22"};{\ar@[blue]"23";"24"};{\ar@{->}"25";"24"};
{\ar@{->}"6";"1"};{\ar@[blue]"6";"11"};{\ar@[green]"16";"11"};{\ar@[red]"16";"21"};
{\ar@[red]"2";"7"};{\ar@{->}"12";"7"};{\ar@[blue]"12";"17"};{\ar@[green]"22";"17"};
{\ar@[green]"8";"3"};{\ar@[red]"8";"13"};{\ar@{->}"18";"13"};{\ar@[blue]"18";"23"};
{\ar@[blue]"4";"9"};{\ar@[green]"14";"9"};{\ar@[red]"14";"19"};{\ar@{->}"24";"19"};
{\ar@{->}"10";"5"};{\ar@[blue]"10";"15"};{\ar@[green]"20";"15"};{\ar@[red]"20";"25"};
\endxy
& \ \ \ \stackrel{\psi}{\longrightarrow} \ \ \ &
\xy
(-24,-24)*{\cdot}="1";(-12,-24)*{\cdot}="2";(0,-24)*{\cdot}="3";(12,-24)*{\cdot}="4";(24,-24)*{\cdot}="5";
(-24,-12)*{\cdot}="6";(-12,-12)*{\cdot}="7";(0,-12)*{\cdot}="8";(12,-12)*{\cdot}="9";(24,-12)*{\cdot}="10";
(-24,0)*{\cdot}="11";(-12,0)*{\cdot}="12";(0,0)*{\cdot}="13";(12,0)*{\cdot}="14";(24,0)*{\cdot}="15";
(-24,12)*{\cdot}="16";(-12,12)*{\cdot}="17";(0,12)*{\cdot}="18";(12,12)*{\cdot}="19";(24,12)*{\cdot}="20";
(-24,24)*{\cdot}="21";(-12,24)*{\cdot}="22";(0,24)*{\cdot}="23";(12,24)*{\cdot}="24";(24,24)*{\cdot}="25";
{\ar@[red]"1";"2"};{\ar@[green]"3";"2"};{\ar@[blue]"3";"4"};{\ar@{->}"5";"4"};
{\ar@{->}"7";"6"};{\ar@[red]"7";"8"};{\ar@[green]"9";"8"};{\ar@[blue]"9";"10"};
{\ar@[blue]"11";"12"};{\ar@{->}"13";"12"};{\ar@[red]"13";"14"};{\ar@[green]"15";"14"};
{\ar@[green]"17";"16"};{\ar@[blue]"17";"18"};{\ar@{->}"19";"18"};{\ar@[red]"19";"20"};
{\ar@[red]"21";"22"};{\ar@[green]"23";"22"};{\ar@[blue]"23";"24"};{\ar@{->}"25";"24"};
{\ar@{->}"6";"1"};{\ar@[blue]"6";"11"};{\ar@[green]"16";"11"};{\ar@[red]"16";"21"};
{\ar@[red]"2";"7"};{\ar@{->}"12";"7"};{\ar@[blue]"12";"17"};{\ar@[green]"22";"17"};
{\ar@[green]"8";"3"};{\ar@[red]"8";"13"};{\ar@{->}"18";"13"};{\ar@[blue]"18";"23"};
{\ar@[blue]"4";"9"};{\ar@[green]"14";"9"};{\ar@[red]"14";"19"};{\ar@{->}"24";"19"};
{\ar@{->}"10";"5"};{\ar@[blue]"10";"15"};{\ar@[green]"20";"15"};{\ar@[red]"20";"25"};
\endxy
\\ \\
Q & \ \ \ \ \ \ & Q'
\end{array}$$
\caption{The quivers $Q$ and $Q'$ in Proposition \ref{Azumaya example} and Remark \ref{first remark}, drawn on a torus.  
The cycles $p$, $q$, $s$ in $Q$ are drawn in red, blue, and green respectively, with suitably chosen tails.}
\label{Q}
\end{figure}

\subsection{Classification of simple modules parameterized by the Azumaya locus}

Given a quiver algebra $A = kQ/I$ and dimension vector $d = (d_i)_{i \in Q_0}$, denote by $\operatorname{Rep}_d(A)$ the closed affine variety of $d$-dimensional representations of $A$ viewed as vector space diagrams on $Q$,
$$\operatorname{Rep}_d(A) \subset \bigoplus_{a \in Q_1} M_{d_{\operatorname{h}(a)} \times d_{\operatorname{t}(a)}}\left(k \right) = \mathbb{A}_{k}^{\sum_{a \in Q_1} d_{\operatorname{h}(a)}d_{\operatorname{t}(a)}}.$$

Now let $\psi: A \to A'$ be a cyclic contraction of dimer algebras.
For the following, consider simple representations $\rho \in \operatorname{Rep}_{1^{Q_0}}(A)$ and $\rho' \in \operatorname{Rep}_{1^{Q'_0}}(A')$.
Recall that $\rho(\delta) \not = 0$ for each $\delta \in Q_1^*$, by Proposition \ref{delta*}.

Set $\rho_0 := \rho$.
For each $n \geq 1$, define the representation $\rho_n \in \operatorname{Rep}_{1^{Q_0}}(A)$ iteratively on $a \in Q_1$ by 
$$\rho_{n+1}(a) := \rho_n(a) \prod_{\substack{\delta \in Q_1^* \\ \operatorname{h}(\delta) = \operatorname{t}(a)}} \rho_n(\delta) \prod_{\substack{\delta' \in Q_1^* \\ \operatorname{h}(\delta') = \operatorname{h}(a)}} \rho_n(\delta')^{-1}.$$
Since no unoriented cycle is contracted to a vertex by \cite[Lemma 4.8.1]{B2}, there is an $N \geq 1$ such that for each $n \geq N$, 
$$\rho_n = \rho_N \ \ \ \text{ and } \ \ \ \rho_N(\delta) = 1 \ \text{ for each } \ \delta \in Q_1^*.$$
Set $\rho^* := \rho_N$.
Clearly $\rho^*$ and $\rho$ are isomorphic representations of $A$.

Consider the map
\begin{equation} \label{psi-1}
\psi^{-1}: kQ' \to \epsilon_0 kQ \epsilon_0
\end{equation}
defined by $\psi^{-1}(a) = b$ where $\psi(b) = a$.
$\psi^{-1}$ is a well-defined algebra homomorphism by Proposition \ref{homomorphism on corner}.1.
We may thus define representations $\psi^{-1}\rho' \in \operatorname{Rep}_{1^{Q_0}}(A)$ and $\psi \rho \in \operatorname{Rep}_{1^{Q'_0}}(A')$ by 
$$( \psi^{-1}\rho')(a) := \rho'(\psi(a)) \ \ \text{ for each } a \in Q_1$$
and
$$\left( \psi \rho \right)(a) := \rho^*( \psi^{-1}(a)) \ \ \text{ for each } a \in Q'_1.$$
Note that $(\psi^{-1}\rho')^* = \psi^{-1}\rho'$.

\begin{Lemma} \label{psi rho}
$\rho^* = \psi^{-1} \rho'$ if and only if $\psi \rho = \rho'$.
\end{Lemma}

\begin{proof}
First suppose $\rho^* = \psi^{-1}\rho'$, and let $a \in Q_1$.
Then
$$(\psi\rho)(a) = \rho^*( \psi^{-1}(a)) = (\psi^{-1}\rho')(\psi^{-1}(a)) = \rho'(\psi(\psi^{-1}(a))) = \rho'(a).$$
Conversely suppose $\psi \rho = \rho'$, and let $a \in Q'_1$.
Then
$$(\psi^{-1} \rho')(a) = \rho'(\psi(a)) = (\psi \rho)(\psi(a)) = \rho^*(\psi^{-1}(\psi(a))) = \rho^*(a).$$
\end{proof}

Recall that a simple $A$-module $V$ is said to sit over a point $\mathfrak{m}$ in the Azumaya locus $\mathcal{A}$ if $V_{\mathfrak{m}} := A_{\mathfrak{m}} \otimes_A V$ is the unique simple $A_{\mathfrak{m}}$-module up to isomorphism.
The Azumaya locus then parameterizes a family of simple $A$-module isoclasses.
  
\begin{Proposition} \label{simple 1}
$V = V_{\rho}$ is a simple $A$-module of dimension $1^{Q_0}$ if and only if $V$ sits over some point $\mathfrak{m} \in \mathcal{A}$.
\end{Proposition}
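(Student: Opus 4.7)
My plan is to pivot between $A$ and $A'$ using Theorem \ref{Azumaya} and the Morita equivalence of Theorem \ref{main1}. For the $(\Leftarrow)$ direction, if $V$ sits over $\mathfrak{m} \in \mathcal{A}$, then Theorem \ref{Azumaya} supplies $\mathfrak{n} \in \mathcal{A}' \cap U$ with $\mathfrak{n} \cap R = \mathfrak{m}$, and Theorem \ref{main1} gives a Morita equivalence $A_{\mathfrak{m}} \sim A'_{\mathfrak{n}}$. Under this equivalence the unique simple $A_{\mathfrak{m}}$-module $V$ corresponds to the unique simple $A'_{\mathfrak{n}}$-module $V_{\rho'}$; since $\mathfrak{n} \in \mathcal{A}'$ and $A'$ has PI degree $|Q'_0|$, $V_{\rho'}$ has dimension vector $1^{Q'_0}$, and Remark \ref{psi rho} (which computes the corresponding diagram $\psi^{-1}\rho'$ on $Q$) then forces $V$ to have dimension $1^{Q_0}$.

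For the $(\Rightarrow)$ direction, let $V_{\rho}$ be a simple $A$-module of dimension $1^{Q_0}$ and set $\mathfrak{m} := \operatorname{ann}_Z V_{\rho}$. By Schur's lemma $Z/\mathfrak{m} = k$, hence $R/\mathfrak{m} = k$. The heart of the argument is to produce a maximal ideal $\mathfrak{n}$ of $S$ lying in $\mathcal{A}' \cap U$ with $\mathfrak{n} \cap R = \mathfrak{m}$: Theorem \ref{Azumaya} will then place $\mathfrak{m}$ in $\mathcal{A}$, and an Azumaya localization has a unique simple module, so $V_{\rho}$ must be the one sitting over $\mathfrak{m}$. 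To secure $\mathfrak{n} \in U$ I would first use simplicity of $V_{\rho}$ to select a cycle $c \in e_iAe_i$ passing through every vertex with $\rho(c) \neq 0$, so that $\bar\eta(c) \in R \setminus \mathfrak{m}$. For any monomial $h = \bar\eta(p) \in S$ (cyclically permute $p$ so that $\operatorname{t}(p) = i$), decomposing $c = c^j_2 c^j_1$ at each vertex $j$ yields a cycle $c^j_1 p c^j_2 \in e_jAe_j$ with $\bar\eta$-image $\bar\eta(c) h$; thus $\bar\eta(c) h$ lies in $\bigcap_j \bar\eta(e_jAe_j) = R$, giving $h = \bar\eta(c) h / \bar\eta(c) \in R_{\mathfrak{m}}$. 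Consequently $S \subseteq R_{\mathfrak{m}}$, and taking $\mathfrak{n} := \mathfrak{m}R_{\mathfrak{m}} \cap S$ gives a prime with $S/\mathfrak{n} \hookrightarrow R_{\mathfrak{m}}/\mathfrak{m}R_{\mathfrak{m}} = k$, so $\mathfrak{n} \in \operatorname{Max}S$; the standard localization identities then produce $\mathfrak{n} \cap R = \mathfrak{m}$ and $R_{\mathfrak{m}} = S_{\mathfrak{n}}$, placing $\mathfrak{n} \in U$.

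With $\mathfrak{n} \in U$ in hand, Theorem \ref{main1} activates the Morita equivalence $A_{\mathfrak{m}} \sim A'_{\mathfrak{n}}$, and Remark \ref{psi rho} carries $V_{\rho}$ over to a simple $A'_{\mathfrak{n}}$-module $V_{\psi\rho}$ of dimension $1^{Q'_0}$. Since the maximal $k$-dimension of a simple $A'$-module is $|Q'_0|$ by \cite[Proposition 2.5, Lemma 2.13]{B}, $\mathfrak{n}$ automatically lies in $\mathcal{A}'$, closing the loop back to $\mathfrak{m} \in \mathcal{A}$ via Theorem \ref{Azumaya}. The main obstacle I anticipate is the cycle argument for $\mathfrak{n} \in U$: merely knowing that $R$ is depicted by $S$ produces \emph{some} maximal ideal of $S$ over $\mathfrak{m}$, but gives no reason for that ideal to lie in $U$; it is the simplicity of $V_{\rho}$ that must be leveraged, via the ``rotating'' cycle $c$, to upgrade an arbitrary over-ideal to one witnessing the equality $R_{\mathfrak{m}} = S_{\mathfrak{n}}$.
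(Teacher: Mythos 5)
Your proof is correct, and it splits the work differently from the paper. For the $(\Rightarrow)$ direction you follow essentially the paper's route: transport $V_{\rho}$ to a simple $A'$-module via Remark \ref{psi rho}, show its annihilator $\mathfrak{n}$ lies in $\mathcal{A}'\cap U$, and conclude with Theorem \ref{Azumaya}. The difference is that the paper outsources the key equality $R_{\mathfrak{m}}=S_{\mathfrak{n}}$ to the cited proof of \cite[Theorem 4.15]{B2}, whereas you reconstruct it from scratch with the ``rotating cycle'' $c$; your version is more self-contained, and your ordering (establish $\mathfrak{n}\in U$ \emph{before} invoking the Morita equivalence of Remark \ref{psi rho}) is actually cleaner than the paper's. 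One small imprecision: for a monomial $h=\bar{\eta}(p)$ with $p$ a cycle at a vertex $\ell$, you cannot in general ``cyclically permute $p$ so that $\operatorname{t}(p)=i$'' since $p$ need not pass through $i$; the fix is to break the cyclic permutation of $c$ based at $j$ at the vertex $\ell$ instead, writing $c\sim d_2d_1$ with $d_1\in e_{\ell}Ae_j$, $d_2\in e_jAe_{\ell}$, and forming $d_2pd_1\in e_jAe_j$. For the $(\Leftarrow)$ direction your argument genuinely diverges: the paper works entirely inside $A$, using that the PI degree of $A_{\mathfrak{m}}$ is $|Q_0|$ (Proposition \ref{Azumaya 1}) together with a nonzero central element $z$ with $ze_i$ a cycle at each vertex and Schur's lemma to force $\operatorname{dim}_k(e_iV)\geq 1$ for every $i$; you instead pivot through $A'$ via Theorems \ref{Azumaya} and \ref{main1} and import the fact that maximal-dimensional simple $A'$-modules have dimension vector $1^{Q'_0}$ from \cite[Proposition 2.5, Lemma 2.13]{B}. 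Both are valid; the paper's version is more elementary and avoids the external input on $A'$, while yours makes the Morita transport do all the work uniformly in both directions.
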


\begin{proof}
($\Leftarrow$) 
First suppose $\mathfrak{m} \in \mathcal{A}$. 
Let $V_{\mathfrak{m}}$ be the unique simple $A_{\mathfrak{m}}$-module.
We claim that $V$ has dimension $1^{Q_0}$.

By Theorem \ref{Azumaya 1}, the PI degree of $A_{\mathfrak{m}}$ is $|Q_0|$.
Thus 
\begin{equation} \label{Vm}
\operatorname{dim}_k \left( V_{\mathfrak{m}} \right) = |Q_0|.
\end{equation}

Let $\mathfrak{n} \in \operatorname{Max}S$ be such that $\mathfrak{n} \cap Z = \mathfrak{m}$.
Since $\mathfrak{m} \in \mathcal{A}$, we have $\mathfrak{n} \in \mathcal{A}' \cap U$ by Theorem \ref{Azumaya}.
But $\mathfrak{n} \in U$ implies $\mathfrak{m} \not = \mathfrak{z}_0$ by (\ref{U z0}). 
Thus there is a monomial $z \in Z \setminus \mathfrak{m}$.
In particular, $\rho(z) \not = 0$.

Since $V$ is simple and $z$ is central, $\rho(z)$ is a scalar multiple of the identity by Schur's lemma.
Whence $\rho(ze_i) \not = 0$ for each $i \in Q_0$.
Thus $\operatorname{dim}_k \left( e_i V_{\mathfrak{m}} \right) \geq 1$ for each $i \in Q_0$.
Therefore $V_{\mathfrak{m}}$ has dimension $1^{Q_0}$ by (\ref{Vm}).

($\Rightarrow$) 
Now suppose $V_{\rho}$ is simple of dimension $1^{Q_0}$.
We claim that $V$ sits over a point in $\mathcal{A}$.
 
Consider $\rho':= \psi \rho$ as in Lemma \ref{psi rho}.
Set $\mathfrak{m} := \operatorname{ann}_Z V_{\rho}$ and $\mathfrak{n} := \operatorname{ann}_{Z'} V_{\rho'}$.
Recall that $V_{\rho'}$ is simple of dimension $1^{Q'_0}$.
In particular,
\begin{equation} \label{1}
\mathfrak{n} \in \mathcal{A}'.
\end{equation}

Furthermore, since $V_{\rho}$ is simple of dimension $1^{Q_0}$, $\mathfrak{m} \not = \mathfrak{z}_0$.
Thus by (\ref{U z0}), 
\begin{equation} \label{2}
\mathfrak{n} \in U.
\end{equation}
It then follows from (\ref{1}), (\ref{2}), and Theorem \ref{Azumaya} that $\mathfrak{m} \in \mathcal{A}$.
\end{proof}

In the following, the algebra homomorphism $\tilde{\tau}$ defined in (\ref{eta}) is used to classify the simple $A$-modules parameterized by the Azumaya locus. 
This classification shows that $\tilde{\tau}$ is very close to being an impression of $A$ even though $A$ may not embed into a matrix ring over a commutative ring; see \cite[Proposition 2.5]{B}.

\begin{Theorem} \label{almost impression}
For each $A$-module $V_{\rho}$ that sits over a point in the Azumaya locus $\mathcal{A}$, there is a point $\mathfrak{b} \in \operatorname{Max}B$ such that $\rho$ is isomorphic to the composition
$$A \stackrel{\tilde{\tau}}{\to} M_{|Q_0|}(B) \stackrel{\epsilon_{\mathfrak{b}}}{\to} M_{|Q_0|}\left(B/\mathfrak{b} \right).$$
\end{Theorem}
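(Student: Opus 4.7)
The plan is to transfer the impression-based description of simple $A'$-modules across the contraction $\psi$, exploiting the factorization $\bar\eta = \bar\tau\circ\psi$. By Proposition \ref{simple 1}, the hypothesis amounts to $V_\rho$ being a simple $A$-module of dimension $1^{Q_0}$. Using Proposition \ref{* subset} together with \cite[Lemma 4.10]{B2} (no cycle of $Q$ is contracted to a vertex, and $Q_1^* \subseteq Q_1^{\dagger}$), I first normalize $\rho$ within its isomorphism class so that $\rho(\delta)=1$ for every $\delta \in Q_1^*$, and then form $\rho' := \psi\rho$ as in Remark \ref{psi rho}; this is a simple $A'$-module of dimension $1^{Q'_0}$ satisfying $\rho'(\psi(a)) = \rho(a)$ for every $a \in Q_1 \setminus Q_1^*$.

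Next I apply \cite[Proposition 2.5]{B} to the impression $(\tau,B)$ of $A'$: since $A'$ is cancellative, noetherian, and module-finite over its center, and $B$ is the polynomial ring in (\ref{B}) (hence prime), that proposition furnishes a point $\mathfrak{b} \in \operatorname{Max} B$ together with an $A'$-module isomorphism $V_{\rho'} \cong V_{\epsilon_{\mathfrak{b}}\tau}$. Choosing bases along this isomorphism, I may assume $\rho'(b) = \epsilon_{\mathfrak{b}}(\bar\tau(b))$ for every $b \in Q'_1$.

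To conclude $\rho \cong \epsilon_{\mathfrak{b}}\eta$, I lift the basis of $V_{\rho'}$ back to $V_\rho$ along contracted arrows: for each fiber $\psi^{-1}(i') \subset Q_0$, the subquiver on $\psi^{-1}(i')$ with arrows in $Q_1^*$ is connected (since $\psi$ identifies the endpoints of contracted arrows) and free of cycles (by \cite[Lemma 4.10]{B2}), so starting from the chosen basis vector at $i'$ and using $\rho(\delta)=1$ to transport it along each $\delta \in Q_1^*$ yields a well-defined basis at every $i \in \psi^{-1}(i')$. In this basis, for $a \in Q_1 \setminus Q_1^*$ the identity $\bar\eta = \bar\tau\circ\psi$ gives
\begin{equation*}
 \rho(a) \;=\; \rho'(\psi(a)) \;=\; \epsilon_{\mathfrak{b}}(\bar\tau(\psi(a))) \;=\; \epsilon_{\mathfrak{b}}(\bar\eta(a)),
\end{equation*}
while for $\delta \in Q_1^*$, $\psi(\delta)$ is a vertex, so $\bar\eta(\delta) = \bar\tau(\psi(\delta)) = 1 = \rho(\delta)$. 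Hence $\rho$ and $\epsilon_{\mathfrak{b}}\eta$ coincide as vector-space diagrams on $Q$, which yields the desired isomorphism of $A$-modules.

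The main obstacle is the basis bookkeeping in the last step: $\psi$ collapses each fiber $\psi^{-1}(i')$ to a single vertex of $Q'$, so a priori the isomorphism $V_{\rho'}\cong V_{\epsilon_{\mathfrak{b}}\tau}$ determines the basis on $V_\rho$ only up to independent scalars at the several vertices of each fiber. One must invoke both the normalization $\rho(\delta)=1$ on $Q_1^*$ and the absence of contracted cycles to rigidify these choices into a consistent transport; once this is done, the remaining comparison of matrix entries is formal.
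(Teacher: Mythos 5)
Your proposal is correct and follows essentially the same route as the paper: reduce via Proposition \ref{simple 1} to a simple module of dimension $1^{Q_0}$, pass to $\rho' = \psi\rho$ as in Remark \ref{psi rho}, invoke \cite[Proposition 2.5]{B} for the impression $(\tau,B)$ of $A'$ to produce $\mathfrak{b}$, and pull back along $\bar\eta = \bar\tau\circ\psi$. The only difference is that you spell out the normalization $\rho(\delta)=1$ on $Q_1^*$ and the basis transport along contracted fibers, which the paper delegates to Remark \ref{psi rho} and to the corner-ring identity $\rho|_{e_jAe_i} \cong \epsilon_{\mathfrak{b}}\eta|_{e_jAe_i}$.
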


\begin{proof}
Suppose $V_{\rho}$ is an $A$-module which sits over a point in $\mathcal{A}$.
Then by Proposition \ref{simple 1}, $V_{\rho}$ is a simple $A$-module of dimension $1^{Q_0}$.
By Lemma \ref{psi rho}, there is a simple $A'$-module $V_{\rho'}$ of dimension $1^{Q'_0}$ such that $\psi^{-1} \rho' = \rho^* \cong \rho$.
Since $(\tau,B)$ is an impression of $A$, by \cite[Proposition 2.5]{B} there is a point $\mathfrak{b} \in \operatorname{Max}B$ such that $\rho'$ is isomorphic to the composition  
$$A \stackrel{\tau}{\hookrightarrow} M_{|Q'_0|}(B) \stackrel{\epsilon_{\mathfrak{b}}}{\to} M_{|Q'_0|}\left(B/\mathfrak{b} \right).$$
But then for each $i,j \in Q_0$,
$$\rho|_{e_jAe_i} \cong \left( \psi^{-1} \rho' \right)|_{e_jAe_i} = \rho' \psi|_{e_jAe_i} \cong \epsilon_{\mathfrak{b}} \tau \psi|_{e_jAe_i} = \epsilon_{\mathfrak{b}} \tilde{\tau}|_{e_jAe_i}.$$
\end{proof}

\subsection{The cycle algebra is unique} \label{coordinate-free}

\begin{Definition}\label{S(A)} \rm{
Let $A$ be a dimer algebra.
Denote by $\mathbb{S}(A)$ the open subvariety of $\operatorname{Rep}_{1^{Q_0}}(A)$ of simple representations, and by $\overbar{\mathbb{S}(A)}$ its Zariski closure.
For an element $p$ in a corner ring $e_jAe_i$, denote by $\mu(p)$ the corresponding function in $k\left[ \operatorname{Rep}_{1^{Q_0}}(A) \right]$ taking the value 
$$\mu(p)(\rho) := \rho(p) \in k$$ 
on each $\rho \in \operatorname{Rep}_{1^{Q_0}}(A)$.
} \end{Definition}

\begin{Remark} \label{first remark} \rm{
It possible for the closure $\overbar{\mathbb{S}(A)}$ of $\mathbb{S}(A)$ to be properly contained in $\operatorname{Rep}_{1^{Q_0}}(A)$.
Indeed, $\overbar{\mathbb{S}(A)} \not = \operatorname{Rep}_{1^{Q_0}}(A)$ if there are cycles $p,q \in A$ and a representation $\rho \in \operatorname{Rep}_{1^{Q_0}}(A)$ such that 
$$\overbar{p} = \overbar{q} \ \ \ \text{ and } \ \ \ \rho(p) \not = \rho(q) \ \ \text{(as scalars)},$$ 
by Theorem \ref{almost impression}.

For example, let $A$ be the dimer algebra with quiver $Q$ given in Figure \ref{Q}.
The paths $p$ and $q$, drawn in red and blue respectively, satisfy $\overbar{p} = \overbar{q}$ by \cite[Example 3.9]{B2}.
However, consider the semisimple representation $\rho \in \operatorname{Rep}_{1^{Q_0}}(A)$ where each arrow subpath of $p$ is represented by $1$, each arrow subpath of $q$ is represented by $2$, and all other arrows are represented by zero.
Then $\rho(p) = 1 \not = 2^8 = \rho(q)$.
}\end{Remark}

Recall that the reductive algebraic group 
$$\operatorname{GL} := \prod_{j \in Q_0} \operatorname{GL}_{d_j}(k)$$
acts linearly on $\operatorname{Rep}_d(A)$ by conjugation.

\begin{Theorem} \label{cycle algebra theorem}
Suppose $\psi: A \to A'$ is a cyclic contraction.
Then the cycle algebra $S$ is isomorphic to the $\operatorname{GL}$-invariant rings
\begin{equation} \label{S=}
S = k[ \overbar{\mathbb{S}(A)} ]^{\operatorname{GL}} = k[ \overbar{\mathbb{S}(A')} ]^{\operatorname{GL}}.
\end{equation}
\end{Theorem}

\begin{proof}
Each arrow $a \in Q_1$ vanishes at some point of $\overbar{\mathbb{S}(A)}$, and so $\mu(a)$ is not invertible on $\overbar{\mathbb{S}(A)}$ (though if $a \in Q_1^{\mathcal{S}}$, then $\mu(a)$ is invertible on $\mathbb{S}(A)$).
Therefore the $\operatorname{GL}$-invariants in $k[ \overbar{\mathbb{S}(A)} ]$ and $k[ \overbar{\mathbb{S}(A')}]$ are generated by the $\mu$-images of oriented cycles in $Q$ and $Q'$ respectively.

By Proposition \ref{simple 1} and Theorem \ref{almost impression}, for each cycle $p \in A$ we may set 
$$\mu(p) = \overbar{p}.$$ 
Therefore (\ref{S=}) holds.
\end{proof}

\begin{Remark} \rm{
The `mesonic chiral ring' in an abelian quiver gauge theory is the ring of gauge invariant operators defined on the vacuum moduli space.
Morally, the mesonic chiral ring is then the ring of invariants $k[\operatorname{Rep}_{1^{Q_0}}(A)]^{\operatorname{GL}}$. 
However, the mesonic chiral ring may not coincide with the cycle algebra $S$ by Remark \ref{first remark}.
In the example therein, $\mu(p) \not = \mu(q)$ in $k[\operatorname{Rep}_{1^{Q_0}}(A)]^{\operatorname{GL}}$, whereas $\mu(p) = \mu(q)$ in $S$.
}\end{Remark}

We conclude with a curious example of a dimer algebra $A$ for which $\mathbb{S}(A)$ consists of only two points.

\begin{Non-example} \label{non-ex} \rm{
Consider the dimer algebra $A$ with quiver $Q$ given in Figure \ref{non-example dimer}.
$Q$ consists of one vertex and four loops $a,b,c,d$, and has no perfect matchings.
$A$ has two permanent 2-cycles, and so does not admit a contraction (cyclic or not) to a cancellative dimer algebra \cite[Proposition 4.43]{B2}.
Furthermore, $A$ has only two representations $\rho_0$ and $\rho_1$ of dimension vector $1^{Q_0} = 1$, namely
$$\rho_0(a) = \rho_0(b) = \rho_0(c) = \rho_0(d) = 0$$
and
$$\rho_1(a) = \rho_1(b) = \rho_1(c) = \rho_1(d) = 1.$$
In particular, 
$$\operatorname{Rep}_{1}(A) = \mathbb{S}(A) = \left\{ \rho_0, \rho_1 \right\}.$$
}\end{Non-example}

\begin{figure}
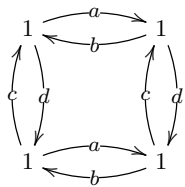

$$\xy 0;/r.3pc/:
(-7,7)*+{\text{\scriptsize{$1$}}}="1";(7,7)*+{\text{\scriptsize{$1$}}}="2";
(-7,-7)*+{\text{\scriptsize{$1$}}}="4";(7,-7)*+{\text{\scriptsize{$1$}}}="3";
{\ar@/^/|-a"1";"2"};{\ar@/^/|-d"2";"3"};{\ar@/^/|-b"3";"4"};{\ar@/^/|-c"4";"1"};{\ar@/^/|-b"2";"1"};{\ar@/^/|-d"1";"4"};{\ar@/^/|-a"4";"3"};{\ar@/^/|-c"3";"2"};
\endxy$$
\caption{The quiver $Q$ in Non-example \ref{non-ex}, drawn on a torus.  The representation space $\operatorname{Rep}_1(A) = \mathbb{S}(A)$ of $A = kQ/I$ consists of only two points.}
\label{non-example dimer}
\end{figure}

\ \\
\textbf{Acknowledgments.}  Part of this article is based on work supported by the Simons Foundation while the author was a postdoc at the Simons Center for Geometry and Physics at Stony Brook University. 

\bibliographystyle{hep}
\def\cprime{$'$} \def\cprime{$'$}

\end{document}